\title{On orthogonal matrices with zero diagonal}
\author{R.~F.~Bailey\footnote{School of Science and the Environment (Mathematics), Grenfell Campus, Memorial University of Newfoundland, Corner Brook, NL A2H~6P9, Canada. E-mail: \texttt{rbailey@grenfell.mun.ca}} \and 
R.~Craigen\footnote{Department of Mathematics, University of Manitoba, Winnipeg, MB R3T~2N2, Canada. E-mail: \texttt{robert.craigen@umanitoba.ca}}}
\newtheorem{theorem}{Theorem}[section]
\newtheorem{lemma}[theorem]{Lemma}
\newtheorem{proposition}[theorem]{Proposition}
\newtheorem{corollary}[theorem]{Corollary}
\theoremstyle{definition}
\newtheorem{defn}[theorem]{Definition}
\newtheorem{example}[theorem]{Example}
\newcommand{\reals}{\mathbb{R}}
\newcommand{\DRT}{\mathrm{DRT}}
\newcommand{\OMZD}{\mathrm{OMZD}}
\newcommand{\OMPZD}{\mathrm{OMPZD}}
\begin{document}

\maketitle

\begin{abstract}
We consider real orthogonal $n\times n$ matrices whose diagonal entries are zero and off-diagonal entries nonzero, which we refer to as $\OMZD(n)$.  We show that there exists an $\OMZD(n)$ if and only if $n\neq 1,\ 3$, and that a symmetric $\OMZD(n)$ exists if and only if $n$ is even and $n\neq 4$.  We also give a construction of $\OMZD(n)$ obtained from doubly regular tournaments.  Finally, we apply our results to determine the minimum number of distinct eigenvalues of matrices associated with some families of graphs, and consider the related notion of orthogonal matrices with partially-zero diagonal.\\

\noindent {\em Keywords:} Orthogonal matrix; orthogonal pattern; zero diagonal; distinct eigenvalues. \\

\noindent {\em MSC2010:} 15B10 (primary), 05B20, 05C50, 15A18 (secondary) \\ \hrule
\end{abstract}

\section{Introduction} \label{sec:intro}
An $n\times n$ real matrix $A$ is {\em orthogonal} if and only if $AA^\mathrm{T} = cI$ for some positive constant $c\in\reals^+$.  (We do not insist that $c=1$, i.e.\ that $AA^\mathrm{T}=I$.)  A number of papers, such as~\cite{Beasley93,Cheon99,Deaett11,Larson13}, have studied the {\em pattern} (or the {\em support}) of a matrix, i.e.\ the arrangement of its zero and nonzero entries, and which patterns admit matrices with given algebraic properties (for instance, orthogonal, positive definite, etc.).  In this vein, a pattern is called an {\em orthogonal pattern} if it admits an orthogonal matrix.  A seemingly natural pattern to consider is where the zero entries are precisely those on the main diagonal; orthogonal matrices with this pattern are the subject of this paper.  For brevity, we make the following definition.

\begin{defn} \label{defn:omzd}
Let $A$ be an $n\times n$ real matrix.  We say that $A$ is an {\em orthogonal matrix with zero diagonal}, or an $\OMZD(n)$, if and only if it is orthogonal, its diagonal entries are all zero, and its off-diagonal entries are all nonzero.
\end{defn}

A matrix where all diagonal entries are zero is sometimes called a {\em hollow} matrix (see \cite[Section~3.1.3]{Gentle17}, for instance); we will not use this term as it also allows for off-diagonal zero entries.
A {\em conference matrix} of order $n$ is an $\OMZD(n)$ whose off-diagonal entries are all $\pm 1$. 
(For further details see \mbox{\cite[Section~V.6]{handbook}}, \cite{Goethals67} or \cite[Section~4.3]{Ionin06}.)
%\newpage
\begin{example} \label{example:conf} \rm
The following are conference matrices of orders $2$, $4$ and $6$ (taken from~\cite[Example~V.6.4]{handbook}):
\[ \begin{bmatrix} 0 & 1 \\ 1 & 0 \end{bmatrix},
   \qquad
   \begin{bmatrix} 0 &  1 &  1 &  1 \\
                  -1 &  0 & -1 &  1 \\
                  -1 &  1 &  0 & -1 \\
                  -1 & -1 &  1 &  0 \end{bmatrix},
\qquad
\begin{bmatrix} 0 &  1 &  1 &  1 &  1 &  1 \\
                1 &  0 &  1 & -1 & -1 &  1 \\
								1 &  1 &  0 &  1 & -1 & -1 \\
								1 & -1 &  1 &  0 &  1 & -1 \\
								1 & -1 & -1 &  1 &  0 &  1 \\
								1 &  1 & -1 & -1 &  1 &  0	\end{bmatrix}. \]
\end{example}

There are known necessary conditions for the existence of conference matrices, and owing to the constraint on their entries it is not surprising that they are difficult, in general, to construct.  From the definition one can show that $n$ must be even.  Two conference matrices are {\em equivalent} if one can be obtained from the other by multiplying rows or columns by $-1$, or by simultaneously permuting rows and columns; if $n\equiv 2$~mod~$4$, a conference matrix is equivalent to a symmetric matrix, while if $n \equiv 0$~mod~$4$, a conference matrix is equivalent to a skew-symmetric matrix (see, for instance,~\cite{DGS71} or~\cite{Craigen91}).  However, $n$ being even is not a sufficient condition: Belevitch~\cite{Belevitch50} showed that if a symmetric conference matrix of order~$n$ exists (and thus $n\equiv 2$~mod~$4$), then $n-1$ must be the sum of two squares; consequently, there do not exist conference matrices of orders $22$, $34$ or $58$.  It is unknown if there exists a conference matrix of order $66$ (see \mbox{\cite[Table~V.6.10]{handbook}}.  However, the following result, implied by the main construction of Paley's classic 1933 paper~\cite{Paley33}, gives an infinite family of examples.

\begin{theorem} \label{theorem:paleyconf}
Let $q$ be an odd prime power.  If $q\equiv 1$~mod~$4$ then there exists a symmetric conference matrix of order $q+1$; if $q\equiv 3$~mod~$4$ then there exists a skew-symmetric conference matrix of order $q+1$.
\end{theorem}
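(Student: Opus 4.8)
The plan is to reproduce the classical Paley construction. Fix the finite field $\mathbb{F}_q$, and let $\chi\colon\mathbb{F}_q\to\{-1,0,1\}$ be the quadratic character: $\chi(x)=1$ if $x$ is a nonzero square, $\chi(x)=-1$ if $x$ is a nonsquare, and $\chi(0)=0$. Index the rows and columns of a $q\times q$ matrix $Q$ by the elements of $\mathbb{F}_q$, and set $Q_{a,b}=\chi(a-b)$. The first step is to establish two facts about $Q$: (i) every row and column sum of $Q$ is zero, equivalently $QJ=JQ=0$ where $J$ is the all-ones matrix; and (ii) $QQ^{\mathrm T}=qI-J$. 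Fact (i) is immediate from $\sum_{x\in\mathbb{F}_q}\chi(x)=0$ (there are equally many nonzero squares and nonsquares), and fact (ii) reduces to the character-sum identity $\sum_{c\in\mathbb{F}_q}\chi(c)\chi(c+d)=-1$ for every $d\neq 0$, together with $\sum_c\chi(c)^2=q-1$ for the diagonal entries.

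Next I would border $Q$ to obtain the $(q+1)\times(q+1)$ matrix
\[ C=\begin{bmatrix} 0 & \mathbf{1}^{\mathrm T}\\ \eps\mathbf{1} & Q\end{bmatrix}, \]
where $\mathbf{1}$ is the all-ones column vector of length $q$, and $\eps=1$ if $q\equiv 1\pmod 4$ while $\eps=-1$ if $q\equiv 3\pmod 4$. By construction every diagonal entry of $C$ is zero (using $\chi(0)=0$) and every off-diagonal entry is $\pm 1$. A single block multiplication, using facts (i) and (ii) and $\mathbf{1}^{\mathrm T}\mathbf{1}=q$, yields $CC^{\mathrm T}=qI_{q+1}$; hence $C$ is orthogonal with zero diagonal and $\pm1$ off-diagonal entries, i.e.\ a conference matrix of order $q+1$.

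It then remains to pin down the symmetry type, and this is where the congruence on $q$ enters a second time. Since $\chi$ is multiplicative, $Q_{a,b}=\chi(a-b)=\chi(-1)\,\chi(b-a)=\chi(-1)\,Q_{b,a}$, and $\chi(-1)=(-1)^{(q-1)/2}$, so $Q$ is symmetric when $q\equiv 1\pmod 4$ and skew-symmetric when $q\equiv 3\pmod 4$. In the first case the choice $\eps=1$ makes the border symmetric, so $C$ is symmetric; in the second case $\eps=-1$ makes the border skew, and the zero diagonal is consistent with skew-symmetry, so $C$ is skew-symmetric. (As a sanity check, $q\equiv 1\pmod 4$ gives order $q+1\equiv 2\pmod 4$ and $q\equiv 3\pmod 4$ gives order $q+1\equiv 0\pmod 4$, matching the general constraints mentioned above.)

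The only real content is the character-sum identity behind fact (ii), which I would prove by the standard substitution: the $c=0$ term vanishes, and for $c\neq 0$ we have $\chi(c)\chi(c+d)=\chi\bigl(c(c+d)\bigr)=\chi(c^2)\,\chi(1+dc^{-1})=\chi(1+dc^{-1})$; as $c$ runs over $\mathbb{F}_q^{\times}$ the argument $1+dc^{-1}$ runs over $\mathbb{F}_q\setminus\{1\}$, so the sum equals $\bigl(\sum_{t\in\mathbb{F}_q}\chi(t)\bigr)-\chi(1)=-1$. Everything else is bookkeeping; the main (and fairly mild) obstacle is keeping the two roles of $q\bmod 4$ straight — one fixing the sign $\eps$ needed for $CC^{\mathrm T}=qI$, the other governing the symmetry of $Q$ — and verifying the block product cleanly.
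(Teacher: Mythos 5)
Your argument is correct and is precisely the classical Paley construction; the paper itself offers no proof of this statement (it simply cites Paley's 1933 paper), so there is nothing to diverge from, and all the ingredients you use---$\sum_x\chi(x)=0$, the character-sum identity $\sum_c\chi(c)\chi(c+d)=-1$ for $d\neq 0$ giving $QQ^{\mathrm T}=qI-J$, the bordering, and $\chi(-1)=(-1)^{(q-1)/2}$ determining the symmetry type---are verified correctly. One small quibble with your closing remark: the sign $\eps$ is not ``needed for $CC^{\mathrm T}=qI$'' (that identity holds for either choice, since $\eps^2=1$ and the cross terms vanish by the zero row sums of $Q$ alone); $\eps$ only governs whether $C$ is symmetric or skew-symmetric, and your actual block computation does not depend on this misstatement.
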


The purpose of this article is to investigate what happens when the requirement that the nonzero entries are $\pm 1$ is removed.  For odd orders, it is not difficult to see that neither an $\OMZD(1)$ nor an $\OMZD(3)$ can exist.  However, as the following two examples demonstrate, OMZDs with odd orders {\em do} exist.

\begin{example} \label{example:5x5} \rm
The following matrix is an $\OMZD(5)$:
\[ \begin{bmatrix} 0 & 1 & 1 & 1 & 1 \\
                   1 & 0 & a & 1 & b \\
                   1 & b & 0 & a & 1 \\
                   1 & 1 & b & 0 & a \\
                   1 & a & 1 & b & 0 \end{bmatrix} \]
where $a=\frac{-1+\sqrt{3}}{2}$ and $b=\frac{-1-\sqrt{3}}{2}$.
\end{example}

\newpage
\begin{example} \label{example:7x7} \rm
The following matrix is an $\OMZD(7)$ (found with the help of \textsc{Maple}):
\[ \begin{bmatrix} 0 & 1 & 1 & 1 & 1 & 1 & 1 \\
                   1 & 0 & a & b & c & 1 & d \\
                   1 & d & 0 & a & b & c & 1 \\
                   1 & 1 & d & 0 & a & b & c \\
                   1 & c & 1 & d & 0 & a & b \\
                   1 & b & c & 1 & d & 0 & a \\
                   1 & a & b & c & 1 & d & 0 \end{bmatrix} \]
where $a= -\frac{1}{\sqrt{6}}(1-R)-\frac{1}{3}R$, $b= -1+\frac{\sqrt{6}}{2}$, $c= -\frac{1}{\sqrt{6}}(1+R)+\frac{2}{3}R$ and $d= -\frac{1}{\sqrt{6}}-\frac{1}{3}R$, and where $R=\sqrt{9+4\sqrt{6}}$.
\end{example}

The paper is organized as follows.  In Sections~\ref{sec:existence} and~\ref{sec:symmetric}, we prove that there exists an $\OMZD(n)$ if and only if $n\not\in\{1,3\}$, and also consider the existence of symmetric $\OMZD(n)$, proving that these exist if and only if $n$ is even and $n\neq 4$.  In Section~\ref{sec:drt} we give an alternative, combinatorial construction of $\OMZD(n)$ for certain $n\equiv 3$~mod~$4$.  Finally, in Section~\ref{sec:eigenvalues} we give some applications of OMZDs to spectral graph theory, to the problem of determining the minimum number of distinct eigenvalues for a class of real, symmetric matrices obtained from a bipartite graph.  This also leads us to consider the existence of orthogonal matrices with partially-zero diagonal (OMPZDs).

\section{Existence of OMZDs} \label{sec:existence}

To demonstrate the existence of OMZDs, we make use of the following construction.

\begin{lemma} \label{lemma:adding}
Suppose that there exist an $\OMZD(m+1)$ and an $\OMZD(n+1)$.  Then there exists an $\OMZD(m+n)$.
\end{lemma}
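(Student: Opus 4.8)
The plan is to construct the desired $\OMZD(m+n)$ directly, by amalgamating the two given matrices along a distinguished row and column of each. Let $A$ be an $\OMZD(m+1)$ and $B$ an $\OMZD(n+1)$; since each is orthogonal, $AA^{\mathrm T}=c_A I$ and $BB^{\mathrm T}=c_B I$ for some $c_A,c_B\in\reals^+$, and after replacing $B$ by $\sqrt{c_A/c_B}\,B$ (which preserves the zero/nonzero pattern) we may assume $c_A=c_B=:c$. As $A$ has zero diagonal and all off-diagonal entries nonzero, we may write
\[ A=\begin{bmatrix} A' & \mathbf{u} \\ \mathbf{v}^{\mathrm T} & 0\end{bmatrix}, \qquad B=\begin{bmatrix} 0 & \mathbf{x}^{\mathrm T} \\ \mathbf{y} & B'\end{bmatrix}, \]
where $A'$ is $m\times m$ and $B'$ is $n\times n$, both with zero diagonal and nonzero off-diagonal entries, and $\mathbf{u},\mathbf{v}\in\reals^m$, $\mathbf{x},\mathbf{y}\in\reals^n$ all have nonzero entries.

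Expanding $AA^{\mathrm T}=cI$ and $BB^{\mathrm T}=cI$ blockwise yields the relations $A'A'^{\mathrm T}+\mathbf{u}\mathbf{u}^{\mathrm T}=cI_m$, $A'\mathbf{v}=\mathbf{0}$ and $\mathbf{v}^{\mathrm T}\mathbf{v}=c$, together with $B'B'^{\mathrm T}+\mathbf{y}\mathbf{y}^{\mathrm T}=cI_n$, $B'\mathbf{x}=\mathbf{0}$ and $\mathbf{x}^{\mathrm T}\mathbf{x}=c$. I would then set
\[ C=\begin{bmatrix} A' & \tfrac{1}{\sqrt c}\,\mathbf{u}\mathbf{x}^{\mathrm T} \\ \tfrac{1}{\sqrt c}\,\mathbf{y}\mathbf{v}^{\mathrm T} & B'\end{bmatrix}, \]
an $(m+n)\times(m+n)$ matrix. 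The diagonal of $C$ is the union of the (zero) diagonals of $A'$ and $B'$, hence zero; every off-diagonal entry of $C$ is either an off-diagonal entry of $A'$ or of $B'$, or else equals $u_ix_j/\sqrt c$ or $y_iv_j/\sqrt c$, and is therefore nonzero. So $C$ has the correct pattern.

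It remains to check $CC^{\mathrm T}=cI_{m+n}$. The $(1,1)$ block is $A'A'^{\mathrm T}+\tfrac1c(\mathbf{x}^{\mathrm T}\mathbf{x})\,\mathbf{u}\mathbf{u}^{\mathrm T}=A'A'^{\mathrm T}+\mathbf{u}\mathbf{u}^{\mathrm T}=cI_m$, using $\mathbf{x}^{\mathrm T}\mathbf{x}=c$; the $(2,2)$ block equals $cI_n$ by the symmetric computation using $\mathbf{v}^{\mathrm T}\mathbf{v}=c$; and the $(1,2)$ block is $\tfrac1{\sqrt c}(A'\mathbf{v})\mathbf{y}^{\mathrm T}+\tfrac1{\sqrt c}\mathbf{u}(B'\mathbf{x})^{\mathrm T}=\mathbf{0}$, with the $(2,1)$ block vanishing likewise. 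Hence $C$ is an $\OMZD(m+n)$. I do not expect a genuine obstacle here: the one thing that must be got right is the scaling factor $1/\sqrt c$ (which is precisely what is forced by the diagonal blocks), and the reason the off-diagonal blocks vanish is the pair of ``border'' identities $A'\mathbf{v}=\mathbf{0}$ and $B'\mathbf{x}=\mathbf{0}$ coming from the zero diagonal entries that were split off.
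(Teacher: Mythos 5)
Your proposal is correct and is essentially the paper's own argument: you split off the zero diagonal entry of each matrix together with its bordering row and column, place the two cores on the diagonal of the new matrix, and fill the off-diagonal blocks with the rank-one products of the border vectors, using the border identities to make the cross blocks vanish. The only cosmetic differences are that you keep a common constant $c$ (with $1/\sqrt{c}$ factors) instead of normalizing both matrices to $MM^{\mathrm T}=I$, and you split the zero off at the opposite corner of one matrix.
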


\begin{proof} Let $M$ and $N$ be an $\OMZD(m+1)$ and an $\OMZD(n+1)$ respectively, each scaled so that $MM^\mathrm{T}=I_{m+1}$ and $NN^\mathrm{T}=I_{n+1}$.  We express these in the following form:
\renewcommand{\arraystretch}{1.25}
\[ M = \left[ \begin{array}{c|c} 0 & u^\mathrm{T} \\ \hline v & B \end{array} \right], \qquad N = \left[ \begin{array}{c|c} 0 & x^\mathrm{T} \\ \hline y & C \end{array} \right]
\]%
where $B$ and $C$ are $m\times m$ and $n\times n$, respectively (known as the {\em cores} of $M$ and $N$).  By construction, we have that
\[ MM^\mathrm{T} = \left[ \begin{array}{c|c} u^\mathrm{T}u & u^\mathrm{T}B^\mathrm{T} \\ \hline Bu & vv^\mathrm{T} + BB^\mathrm{T} \end{array} \right] = \left[ \begin{array}{c|c} 1 & 0 \\ \hline 0 & I_m \end{array} \right] \]
and
\[ NN^\mathrm{T} = \left[ \begin{array}{c|c} x^\mathrm{T}x & x^\mathrm{T}C^\mathrm{T} \\ \hline Cx & yy^\mathrm{T} + CC^\mathrm{T} \end{array} \right] = \left[ \begin{array}{c|c} 1 & 0 \\ \hline 0 & I_n \end{array} \right]. \]
Now let
\[ Q = \left[ \begin{array}{c|c} B & vx^\mathrm{T} \\ \hline yu^\mathrm{T} & C \end{array} \right]. \]
Then
\[ QQ^\mathrm{T} = \left[ \begin{array}{c|c} BB^\mathrm{T} + vx^\mathrm{T}xv^\mathrm{T} & Buy^\mathrm{T} + vx^\mathrm{T}C^\mathrm{T} 
                                   \\ \hline yu^\mathrm{T}B^\mathrm{T} + Cxv^\mathrm{T} & yu^\mathrm{T}uy^\mathrm{T} + CC^\mathrm{T} \end{array} \right] 
								 = \left[ \begin{array}{c|c} I_m & 0 \\ \hline 0 & I_n \end{array} \right] = I_{m+n}, \]
\renewcommand{\arraystretch}{1.0}%
so $Q$ is orthogonal.  By construction, $Q$ has all diagonal entries zero and all off-diagonal entries nonzero.  Therefore $Q$ is an $\OMZD(m+n)$.
\end{proof}

As an example of the usefulness of Lemma~\ref{lemma:adding}, we construct an $\OMZD(9)$ from the $\OMZD(6)$ and $\OMZD(5)$ we saw in Examples~\ref{example:conf} and~\ref{example:5x5} respectively.

\begin{example} \label{example:omzd9} \rm
After rescaling, we obtain the following OMZDs from Examples \ref{example:conf} and~\ref{example:5x5}:
\[ \frac{1}{\sqrt{5}} \begin{bmatrix} 0 &  1 &  1 &  1 &  1 &  1 \\
                1 &  0 &  1 & -1 & -1 &  1 \\
								1 &  1 &  0 &  1 & -1 & -1 \\
								1 & -1 &  1 &  0 &  1 & -1 \\
								1 & -1 & -1 &  1 &  0 &  1 \\
								1 &  1 & -1 & -1 &  1 &  0	\end{bmatrix},
\qquad
\frac{1}{2} \begin{bmatrix} 0 & 1 & 1 & 1 & 1 \\
                               1 & 0 & a & 1 & b \\
                               1 & b & 0 & a & 1 \\
                               1 & 1 & b & 0 & a \\
                               1 & a & 1 & b & 0 \end{bmatrix}
\]
(where $a=\frac{-1+\sqrt{3}}{2}$ and $b=\frac{-1-\sqrt{3}}{2}$).  Using the notation in the proof of Lemma~\ref{lemma:adding}, we have
\[ B = \frac{1}{\sqrt{5}} \begin{bmatrix} 0 &  1 & -1 & -1 &  1 \\
								                          1 &  0 &  1 & -1 & -1 \\
							                           -1 &  1 &  0 &  1 & -1 \\
								                         -1 & -1 &  1 &  0 &  1 \\
								                          1 & -1 & -1 &  1 &  0	\end{bmatrix},
\qquad
   C = \frac{1}{2} \begin{bmatrix} 0 & a & 1 & b \\
                                   b & 0 & a & 1 \\
                                   1 & b & 0 & a \\
                                   a & 1 & b & 0 \end{bmatrix}.
\]
By construction, we have $vx^\mathrm{T} = \frac{1}{2\sqrt{5}} K$ and $yu^\mathrm{T} = \frac{1}{2\sqrt{5}} K^\mathrm{T}$ (where $K$ denotes the $5\times 4$ all-ones matrix).  After rescaling by a factor of $2\sqrt{5}$, we obtain the following $\OMZD(9)$:
\renewcommand{\arraystretch}{1.2}
\[ \left[ \begin{array}{ccccc|cccc}
 0 &  2 & -2 & -2 &  2 & 1 & 1 & 1 & 1 \\
 2 &  0 &  2 & -2 & -2 & 1 & 1 & 1 & 1 \\
-2 &  2 &  0 &  2 & -2 & 1 & 1 & 1 & 1 \\
-2 & -2 &  2 &  0 &  2 & 1 & 1 & 1 & 1 \\
 2 & -2 & -2 &  2 &  0 & 1 & 1 & 1 & 1 \\
\hline
 1 &  1 &  1 &  1 &  1 & 0 & a\sqrt{5} & \sqrt{5} & b\sqrt{5} \\
 1 &  1 &  1 &  1 &  1 & b\sqrt{5} & 0 & a\sqrt{5} & \sqrt{5} \\
 1 &  1 &  1 &  1 &  1 & \sqrt{5} & b\sqrt{5} & 0 & a\sqrt{5} \\
 1 &  1 &  1 &  1 &  1 & a\sqrt{5} & \sqrt{5} & b\sqrt{5} & 0 
\end{array} \right]. \]
\renewcommand{\arraystretch}{1.0}
\end{example}

The main result of this paper is the following.

\begin{theorem} \label{theorem:existence}
There exists an $\OMZD(n)$ if and only if $n\not\in\{1,3\}$.
\end{theorem}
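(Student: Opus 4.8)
The plan is to prove the two directions separately. The necessity of $n \notin \{1,3\}$ is the easy half: an $\OMZD(1)$ would be the $1\times 1$ zero matrix, which is not orthogonal, and for $n=3$ one checks directly that orthogonality of the rows forces a contradiction. Indeed, if $A$ is an $\OMZD(3)$ with rows $r_1,r_2,r_3$, scaling so that each $\|r_i\|^2 = c$, the off-diagonal support means $r_1 = (0,\alpha,\beta)$, $r_2 = (\gamma,0,\delta)$, $r_3 = (\epsilon,\zeta,0)$ with all six entries nonzero; then $r_1\cdot r_2 = \beta\delta = 0$ is impossible. So I would dispatch these two cases in a sentence or two, referring to the remark already made in the introduction.

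For sufficiency, the strategy is to combine three ingredients: the conference matrices of Example~\ref{example:conf} and Theorem~\ref{theorem:paleyconf} (which give $\OMZD(n)$ for infinitely many even $n$, and small cases like $n=2,4,6$), the explicit small odd examples (the $\OMZD(5)$ of Example~\ref{example:5x5} and the $\OMZD(7)$ of Example~\ref{example:7x7}), and the doubling-type recursion of Lemma~\ref{lemma:adding}, which produces an $\OMZD(m+n)$ from an $\OMZD(m+1)$ and an $\OMZD(n+1)$. The key observation is that Lemma~\ref{lemma:adding} with $n=2$ (using the $\OMZD(3)$... wait, that doesn't exist) — rather, taking $m$ arbitrary and $n=1$ is degenerate, so the workhorse instance is $n+1 = 2$, i.e.\ adding an $\OMZD(2)$: from an $\OMZD(m+1)$ we get an $\OMZD(m+1)$ again, which is useless. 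The genuinely useful instances are $n+1=5$ and $n+1=7$: from an $\OMZD(m+1)$ we obtain an $\OMZD(m+4)$ and an $\OMZD(m+6)$. Combined with the base cases $\OMZD(2),\OMZD(4),\OMZD(5),\OMZD(6),\OMZD(7)$ and with conference matrices, one can try to show every $n\ge 2$ with $n\ne 3$ is reachable: the steps of size $4$ and $6$ together with steps of size $1$ available from conference matrices (via $\OMZD(m+1)$ and $\OMZD(2)$ giving back size... ) — here I would instead argue that the set $S$ of attainable orders is closed under $n \mapsto n+4$ (apply Lemma~\ref{lemma:adding} to an $\OMZD(n+1)$ — no, we'd need that) and reduce to a finite check.

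Let me restate the recursion cleanly: if $n \in S$ then, writing $n = (n-1)+1$, we may take $m+1 = n$ and any available $N$ of order $n'+1$ to get $m + n' = (n-1) + n' \in S$. So using $N$ an $\OMZD(5)$ gives $n \mapsto n+3$, and $N$ an $\OMZD(7)$ gives $n \mapsto n+5$; also conference matrices of order $q+1$ from Theorem~\ref{theorem:paleyconf} give many even orders directly. Since $\gcd(3,5)=1$, once we know a few consecutive small orders lie in $S$ we get all sufficiently large $n$; explicitly, $\{4,5,6,7\}$ are consecutive and in $S$ (conference matrices of orders $4,6$; Examples~\ref{example:5x5},~\ref{example:7x7} for $5,7$; also $\OMZD(2)$), and adding $3$ repeatedly from $4,5,6$ and then filling with $+5$ steps covers every $n\ge 4$; together with $n=2$ this gives all $n\notin\{1,3\}$. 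The plan is therefore: (i) dispose of $n\in\{1,3\}$; (ii) list the base cases $n=2,4,5,6,7$; (iii) invoke Lemma~\ref{lemma:adding} with $N$ an $\OMZD(5)$ to show $n\in S \Rightarrow n+3 \in S$; (iv) a short induction from the base cases covers all $n\ge 4$.

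The main obstacle is making sure the inductive step never stalls: I must check that whenever I want to apply Lemma~\ref{lemma:adding} to produce order $n+3$, I actually have an $\OMZD(n+1)$ in hand (not merely an $\OMZD(n)$), so the bookkeeping must track "orders of the form $k+1$" rather than "orders $k$." Choosing $\{4,5,6\}\subseteq S$ as the base and stepping by $+3$ reaches $\{4,5,6,7,8,9,\dots\}$ — every integer $\ge 4$ — provided each required $\OMZD(n+1)$ with $n+1\in\{5,6,7\}$ and thereafter is already known, which it is by induction once $\{5,6,7\}\subseteq S$ is established from the examples and a conference matrix. So the real content is just verifying the finitely many small cases and confirming the recursion's offset; there is no deep obstacle beyond careful indexing.
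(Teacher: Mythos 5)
Your proposal is correct and takes essentially the same route as the paper: both rest on Lemma~\ref{lemma:adding} together with the small explicit examples and an induction, the only difference being that you step by $+3$ (combining an $\OMZD(n)$ with the $\OMZD(5)$) where the paper steps by $+2$ (combining an $\OMZD(n-2)$ with an $\OMZD(4)$). One small slip in your final paragraph: to reach order $n+3$ you need an $\OMZD(n)$, not an $\OMZD(n+1)$, exactly as your own clean restatement of the recursion shows, so no extra bookkeeping is required.
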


\begin{proof} We proceed by induction on $n$.  As observed earlier, an $\OMZD(1)$ and $\OMZD(3)$ do not exist, while $\OMZD(n)$ for $n=2,4,5$ are given in Examples~\ref{example:conf} and~\ref{example:5x5}.  Assume that $n\geq 6$, and suppose that there exist $\OMZD(m)$ for all $m<n$ (except $m=1,3$).  In particular, there exist an $\OMZD(n-2)$ and an $\OMZD(4)$, so by Lemma~\ref{lemma:adding} there exists an $\OMZD(n)$.  The result follows by induction.
\end{proof}

\section{Existence of symmetric OMZDs} \label{sec:symmetric}

In this section, we settle the existence question for symmetric OMZDs.  In Example~\ref{example:conf}, we have already seen a symmetric $\OMZD(2)$; however, for the next possible order we are less fortunate. 

\begin{proposition} \label{prop:symm4}
A symmetric $\OMZD(4)$ does not exist.
\end{proposition}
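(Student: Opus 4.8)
The plan is to derive a contradiction from the orthogonality relations among the rows of a hypothetical symmetric $\OMZD(4)$. Write the matrix as $A=(a_{ij})$ with $a_{ii}=0$, $a_{ij}=a_{ji}\neq 0$, and $AA^\mathrm{T}=AA=cI$ for some $c>0$. The diagonal entries of $A^2$ give $\sum_{k\neq i}a_{ik}^2=c$ for each $i$, and the off-diagonal entries give $\sum_{k}a_{ik}a_{kj}=0$ for $i\neq j$, which (since $a_{ii}=a_{jj}=0$) becomes $\sum_{k\neq i,j}a_{ik}a_{kj}=0$. For $n=4$ this last sum has only two terms: for the pair $(i,j)$, writing $\{k,\ell\}=\{1,2,3,4\}\setminus\{i,j\}$, we get $a_{ik}a_{kj}+a_{i\ell}a_{\ell j}=0$.

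First I would set up the six orthogonality equations from the pairs $(1,2),(1,3),(1,4),(2,3),(2,4),(3,4)$ and the four norm equations, using the symmetry to reduce the number of unknowns to the six off-diagonal values $a_{12},a_{13},a_{14},a_{23},a_{24},a_{34}$. Then I would show the sign pattern is forced: from $a_{13}a_{23}+a_{14}a_{24}=0$ (pair $(1,2)$), $a_{12}a_{23}+a_{14}a_{34}=0$ (pair $(1,3)$), and $a_{12}a_{24}+a_{13}a_{34}=0$ (pair $(1,4)$), one can multiply these together or eliminate variables to get a relation forcing, say, $a_{12}a_{34}$, $a_{13}a_{24}$, $a_{14}a_{23}$ to have a constrained product; combined with the analogous equations from pairs $(2,3),(2,4),(3,4)$ this overdetermines the system. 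Concretely, from the pair $(1,2)$ and pair $(3,4)$ equations one gets $a_{13}a_{23}=-a_{14}a_{24}$ and $a_{13}a_{14}=-a_{23}a_{24}$, and dividing (legitimate since all entries are nonzero) yields $a_{23}/a_{14}=a_{14}/a_{23}$, hence $a_{14}^2=a_{23}^2$; cycling through the pairs gives $a_{12}^2=a_{34}^2$, $a_{13}^2=a_{24}^2$, and also relations tying these three common values together, which I expect to collapse to all six squares being equal, i.e.\ $|a_{ij}|$ is constant. That reduces the problem to a conference matrix of order $4$, which must be equivalent to a skew-symmetric matrix (as noted after Example~\ref{example:conf} for $n\equiv 0$~mod~$4$) and in particular cannot be symmetric with nonzero diagonal pattern — or, more self-containedly, a symmetric $\pm1$ matrix of order $4$ with zero diagonal has row sums of the wrong parity to satisfy $AA^\mathrm{T}=3I$, giving the contradiction.

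The main obstacle I anticipate is the bookkeeping in showing the magnitudes are all forced equal: one has to be careful that the chain of divisions is valid (it is, since every off-diagonal entry is nonzero) and that the system of sign/magnitude constraints really does close up rather than leaving a one-parameter family. An alternative, possibly cleaner route that avoids reducing to the conference-matrix case: observe that a symmetric orthogonal matrix scaled so that $AA^\mathrm{T}=I$ satisfies $A^2=I$, so its eigenvalues are $\pm1$; with zero trace and order $4$ it must have eigenvalues $+1,+1,-1,-1$, so $A$ is $2P-I$ for some orthogonal projection $P$ of rank $2$, equivalently $A=I-2\,\frac{xx^\mathrm{T}}{x^\mathrm{T}x}-2\,\frac{yy^\mathrm{T}}{y^\mathrm{T}y}$ is not quite right in general, but $P$ has rank $2$ and $P=P^\mathrm{T}=P^2$ with all diagonal entries equal to $\tfrac12$. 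Then I would analyze the $2\times 2$ principal structure of such a $P$: writing $P$ in terms of an orthonormal basis of its column space, the constraint that every diagonal entry of $P$ equals $\tfrac12$ and every off-diagonal entry of $A=2P-I$ is nonzero pins down $P$ tightly, and a short case analysis shows no such $P$ exists. I would present whichever of these two arguments is shorter in the final write-up; I lean toward the eigenvalue/projection argument as it is the most conceptual and least computational.
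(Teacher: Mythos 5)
Your first route stalls at exactly the step you flag as needing care, and it cannot be repaired in the way you hope. From the six orthogonality relations, magnitude bookkeeping only ever forces the \emph{opposite} pairs to agree: $a_{12}^2=a_{34}^2$, $a_{13}^2=a_{24}^2$, $a_{14}^2=a_{23}^2$. Taking absolute values of all six equations and substituting $|a_{34}|=|a_{12}|$, $|a_{24}|=|a_{13}|$, $|a_{23}|=|a_{14}|$ satisfies every one of them identically, and the equal-row-norm conditions are likewise automatic; so the three common magnitudes remain three free parameters, the hoped-for collapse to ``all $|a_{ij}|$ equal'' is not derivable from these relations, and the reduction to a symmetric conference matrix of order $4$ never happens. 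The contradiction lives entirely in the signs, and in fact falls out of the three equations you already displayed: pair $(1,3)$ gives $a_{12}a_{23}=-a_{14}a_{34}$ and pair $(1,4)$ gives $a_{12}a_{24}=-a_{13}a_{34}$; dividing yields $a_{13}a_{23}=a_{14}a_{24}$, while pair $(1,2)$ says $a_{13}a_{23}=-a_{14}a_{24}$, forcing $a_{14}a_{24}=0$, impossible since all off-diagonal entries are nonzero. (The paper's proof is the same kind of short sign-chase: equal row norms give $b^2=e^2$, a sign normalization gives $b=e$, orthogonality then forces $d=-c$ and $f=-a$, and rows $1$ and $3$ have inner product $-2ac\neq0$.) Moreover, even if you could reduce to a symmetric conference matrix of order $4$, both of your proposed finishes are defective: being \emph{equivalent} to a skew-symmetric matrix does not preclude being symmetric, since the equivalence operations (row/column negations and simultaneous permutations) do not preserve symmetry, so the remark after Example~\ref{example:conf} cannot be cited for this; and the row-sum parity argument fails numerically, because odd row sums $s_i$ with $\sum_i s_i^2=3\cdot 4=12$ are arithmetically possible ($12=9+1+1+1$).

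Your second route (eigenvalues $+1,+1,-1,-1$, so $A=2P-I$ with $P$ a rank-$2$ projection having constant diagonal $\tfrac12$) is a genuinely different and workable idea, but as written it ends with ``a short case analysis shows no such $P$ exists,'' which is precisely the content of the proposition and is not supplied. It can be completed as follows: write $P=QQ^{\mathrm T}$ with $Q$ a $4\times2$ matrix having orthonormal columns; the condition $P_{ii}=\tfrac12$ makes the rows of $\sqrt2\,Q$ unit vectors $(\cos\theta_i,\sin\theta_i)$, so $A_{ij}=2P_{ij}=\cos(\theta_i-\theta_j)$ for $i\neq j$, and column orthonormality says $\sum_i\cos2\theta_i=\sum_i\sin2\theta_i=0$, i.e.\ the four unit vectors $(\cos2\theta_i,\sin2\theta_i)$ sum to zero; four planar unit vectors summing to zero must split into two antipodal pairs, so some $\theta_i-\theta_j\equiv\pi/2\pmod\pi$ and the corresponding $A_{ij}$ vanishes, contradicting the required pattern. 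Until an argument of this kind (or the sign-chase above) is actually written out, both routes in the proposal contain genuine gaps, and the conference-matrix detour in the first route should be abandoned rather than patched.
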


\begin{proof} We give a proof by contradiction.  Let $M$ be a symmetric $\OMZD(4)$, which must have the following form:
\[ M = \begin{bmatrix}
   0 & a & b & c \\
	 a & 0 & d & e \\
	 b & d & 0 & f \\
	 c & e & f & 0 \end{bmatrix} \]
where $a,b,c,d,e,f$ are all nonzero.  By the definition of orthogonality the row norms of $M$ must all be equal, so the first two rows give $a^2+b^2+c^2 = a^2+d^2+e^2$, and thus $b^2+c^2 = d^2+e^2$.  Similarly, the remaining two rows give $b^2+d^2 = c^2+e^2$.  Adding the two relations gives $b^2 = e^2$.    Negating the last row and column if necessary, we may assume that $b=e$.   Then (considering that the first two rows are orthogonal) $d=-c$ and (from rows 1 and 4) $f=-a$.  But then rows 1 and 3 have inner product $ad+cf = -2ac \neq 0$, a contradiction.
\end{proof}

However, as the main result of this section shows below, this is the only possible even order with no such matrix.  (As is standard, we denote the square all-ones matrix by $J$.)

\begin{theorem} \label{theorem:symmetric}
There exists a symmetric $\OMZD(n)$ if and only if $n$ is even and $n\neq 4$.
\end{theorem}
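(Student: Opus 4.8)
The plan is to prove the two directions separately. For necessity, suppose $M$ is a symmetric $\OMZD(n)$. Since the diagonal is zero, $\operatorname{tr}(M)=0$, and since $M$ is symmetric we may scale so that $MM^{\mathrm T}=M^2=I_n$; hence the eigenvalues of $M$ are $\pm 1$, with multiplicities $n_+$ and $n_-$ say, and $\operatorname{tr}(M)=n_+-n_-=0$ forces $n_+=n_-$, so $n$ is even. The exclusion of $n=4$ is exactly Proposition~\ref{prop:symm4}, and $n=2$ is handled by Example~\ref{example:conf}, so we may assume $n\ge 6$ even when proving sufficiency.

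For sufficiency I would split into the two residue classes mod~$4$ and lean on Paley's construction (Theorem~\ref{theorem:paleyconf}) together with small ad hoc examples. When $n\equiv 2$~mod~$4$ and $n-1$ is an odd prime power $q$ (so automatically $q\equiv 1$~mod~$4$), Theorem~\ref{theorem:paleyconf} hands us a symmetric conference matrix of order $n$, which is in particular a symmetric $\OMZD(n)$; the first even orders not of this form (e.g.\ $n=22,34,58,\dots$) will have to be produced by hand or by a separate recursive device. For $n\equiv 0$~mod~$4$ with $n\ge 8$, Paley only gives a \emph{skew}-symmetric conference matrix, so a different idea is needed: I would look for a symmetric orthogonal matrix built in $2\times 2$ blocks — for instance, replace each $\pm 1$ entry of a symmetric conference matrix $C$ of some order $m$ by a suitable real $2\times 2$ block and each zero diagonal entry of $C$ by a real symmetric $2\times 2$ matrix with zero diagonal and nonzero off-diagonal (such as a multiple of $\left[\begin{smallmatrix}0&1\\1&0\end{smallmatrix}\right]$), choosing the scalars so the block matrix is still orthogonal and symmetric with the required pattern. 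This would reduce every order $\equiv 0$~mod~$4$ to an order $\equiv 2$~mod~$2$ case and simultaneously cover the odd-order-free cases, modulo checking the pattern and orthogonality conditions reduce to a small linear/quadratic system.

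A cleaner alternative, which I would actually try first, is a purely recursive approach paralleling Lemma~\ref{lemma:adding}: if $S$ is a symmetric $\OMZD(m+1)$ written in core form $\left[\begin{smallmatrix}0 & u^{\mathrm T}\\ u & B\end{smallmatrix}\right]$ with $u^{\mathrm T}u=1$ and $B$ symmetric, then for two such matrices the block matrix $\left[\begin{smallmatrix}B & u x^{\mathrm T}\\ x u^{\mathrm T} & C\end{smallmatrix}\right]$ from the proof of Lemma~\ref{lemma:adding} is symmetric precisely when $ux^{\mathrm T}=(xu^{\mathrm T})^{\mathrm T}$, i.e.\ automatically — so the same construction preserves symmetry and gives a symmetric $\OMZD(m+n)$ from a symmetric $\OMZD(m+1)$ and a symmetric $\OMZD(n+1)$. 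Starting from the symmetric $\OMZD(2)$ and a symmetric $\OMZD(7)$ (which would need to be exhibited, perhaps a symmetrized version of Example~\ref{example:7x7}, or better a symmetric conference matrix of order $6$ combined appropriately), one generates $2+2=4$ (excluded), $2+6=8$, $6+6=12$, and more generally all sufficiently large even $n$; the finitely many small even orders not reached (and, crucially, $n=6$ itself, which is the Paley conference matrix of order $6$) are listed as base cases.

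The main obstacle, in either approach, is covering the even orders $\equiv 2$~mod~$4$ for which no symmetric conference matrix exists — the Belevitch obstruction (e.g.\ $n=22,34,58$) kills the off-the-shelf construction, and one must genuinely use the extra freedom of arbitrary nonzero real entries. I expect the cleanest route is to establish a recursive lemma that, given a symmetric $\OMZD$ of order $n$, produces one of order $n+4$ (or to get enough small symmetric OMZDs by direct search, as in Examples~\ref{example:5x5} and~\ref{example:7x7}, so that every even $n\ge 6$ is covered by a bounded number of recursion steps from a finite list of verified base cases).
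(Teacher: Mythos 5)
Your necessity argument (odd $n$ impossible via the trace/eigenvalue count, $n=4$ excluded by Proposition~\ref{prop:symm4}) is correct and is exactly the paper's argument. Your observation that the construction of Lemma~\ref{lemma:adding} preserves symmetry is also correct: if $M$ and $N$ are symmetric then $v=u$, $y=x$, and $Q=\left[\begin{smallmatrix} B & ux^{\mathrm T}\\ xu^{\mathrm T} & C\end{smallmatrix}\right]$ is symmetric. The problem is that the sufficiency direction is never actually closed, and the recursive route you favour cannot reach the orders $n\equiv 0$ mod~$4$ from the seeds you have. Lemma~\ref{lemma:adding} turns orders $a=m+1$ and $b=n+1$ into order $a+b-2$, not $a+b$; since every symmetric $\OMZD$ has even order, both seeds must be even, and if all available seeds are $\equiv 2$ mod~$4$ (the symmetric $\OMZD(2)$ and symmetric conference matrices, which exist only for orders $\equiv 2$ mod~$4$), then $a+b-2\equiv 2$ mod~$4$ as well: the recursion is trapped in the residue class $2$ mod~$4$ and never produces $n=8,12,16,\dots$. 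Your proposed fixes do not repair this: ``$2+6=8$'' misreads the lemma (it yields order $6$), and a ``symmetric $\OMZD(7)$'' cannot exist by your own parity argument. Meanwhile the worry you spend most effort on --- the Belevitch obstruction at $n=22,34,58$ --- is a red herring for the recursion, since those orders are $\equiv 2$ mod~$4$ and are reached from orders $2$ and $6$ alone without any large conference matrix. The $2\times 2$ block-substitution idea, which is the only sketch aimed at $0$ mod~$4$, is left as an unverified system of conditions, so the key existence question (a symmetric $\OMZD(8)$, and more generally all $n\equiv 0$ mod~$4$) remains open in your write-up.

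For comparison, the paper avoids recursion entirely and gives a single uniform construction for every even $n=2m\geq 6$: take $A=J-I$, $B=\alpha I+\beta J$ with $\alpha=\sqrt{m^2-1}$ and $\beta=\bigl(-\sqrt{m^2-1}+\sqrt{2m-1}\bigr)/m$, and set $M=\left[\begin{smallmatrix} A & B\\ B & -A\end{smallmatrix}\right]$; since $A$ and $B$ commute, $MM^{\mathrm T}$ is block diagonal with blocks $A^2+B^2=m^2I$ by the choice of $\beta$, and one checks $\beta\neq 0$ and $\alpha+\beta\neq 0$ for $m\geq 3$, so $M$ is a symmetric $\OMZD(2m)$ in both residue classes at once. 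If you want to salvage your recursive plan, the missing ingredient is precisely one explicitly constructed symmetric $\OMZD$ of some order $\equiv 0$ mod~$4$ (e.g.\ order $8$) to seed that residue class; without it the argument has a genuine gap.
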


\begin{proof} First, we suppose that $n$ is even and let $n=2m$.  Since there is a symmetric $\OMZD(2)$ (Example~\ref{example:conf}) but no symmetric $\OMZD(4)$ (Proposition~\ref{prop:symm4}), we assume that $m\geq 3$.

Define $A=J-I$ and $B=\alpha I+ \beta J$, where $\alpha=\sqrt{m^2-1}$ and
\[ \beta = \frac{-\sqrt{m^2-1}+\sqrt{2m-1}}{m}. \]
Now consider the matrix
\[ M = \left[ \begin{array}{c|c} A & B \\ \hline B & -A \end{array} \right]. \]
Clearly, $M$ is symmetric; we shall show that $M$ is an $\OMZD(2m)$.  Now,
\renewcommand{\arraystretch}{1.25}
\[ MM^{\mathrm{T}} = \left[ \begin{array}{c|c} A^2+B^2 & AB-BA \\ \hline BA-AB & A^2+B^2 \end{array} \right]; \]
\renewcommand{\arraystretch}{1.0}
by construction, $A$ and $B$ commute, so $AB-BA=BA-AB=0$, while
\begin{eqnarray*}
A^2+B^2 & = & (J-I)^2 + (\alpha I+ \beta J)^2 \\
        & = & (\alpha^2+1)I + (m\beta^2 +2\alpha\beta + m-2)J \\
				& = & m^2 I + (m\beta^2 +2\beta\sqrt{m^2-1} + m-2)J \\
				& = & m^2 I
\end{eqnarray*}
since our choice of $\beta$ is a solution to the quadratic equation $m\beta^2 +2\beta\sqrt{m^2-1} + m-2 = 0$.  Therefore, $M$ is orthogonal.

It remains to show that $M$ has the desired pattern of zero entries: in order for $M$ to be an $\OMZD(2m)$, it suffices to show that $B$ has no zero entries.  Now, the off-diagonal entries of $B$ are all $\beta$, and $\beta=0$ if and only if $m^2-1 = 2m-1$, which happens precisely when $m=2$.  The diagonal entries of $B$ are all $\alpha+\beta$, and we have that $\alpha+\beta=0$ if and only if $(m-1)\sqrt{m^2-1} + \sqrt{2m-1} =0$; since $m\geq 3$ this is impossible.\footnote{If we had chosen the other solution to the quadratic equation in $\beta$, we would always have $\beta\neq 0$, but would have $\alpha+\beta=0$ when $m=2$ instead. Choosing $\alpha=-\sqrt{m^2-1}$ does not yield a solution for $\beta$ which works for $m=2$ either.}  Thus we have a symmetric $\OMZD(n)$ for any even $n$ (except $n=4$).

Now suppose that $n$ is odd, and for a contradiction suppose that $M$ is a symmetric $\OMZD(n)$.  Since $M$ is symmetric, its eigenvalues are real.  Without loss of generality, we may assume that $MM^\mathrm{T}=I_n$, so therefore its eigenvalues are $\pm 1$.  Since $M$ has trace $0$, it follows that the eigenvalues $1$ and $-1$ have the same multiplicity.  But since $n$ is odd, this is impossible.  Hence, by contradiction, a symmetric $\OMZD(n)$ does not exist when $n$ is odd.
\end{proof}

As a symmetric $\OMZD(4)$ does not exist, and conference matrices only yield symmetric $\OMZD(n)$ for $n\equiv 2$~mod~$4$, the first case where Theorem~\ref{theorem:symmetric} needs to be applied is to obtain a symmetric $\OMZD(8)$.

\begin{example} \label{example:symmetric8} \rm
We consider the construction of Theorem~\ref{theorem:symmetric} for the case $n=2m=8$, so $m=4$, $\alpha = \sqrt{15}$ and $\beta = \frac{\sqrt{7}-\sqrt{15}}{4}$.  Then we obtain the following symmetric $\OMZD(8)$:
\renewcommand{\arraystretch}{1.2}
\[ \left[ \begin{array}{cccc|cccc}
0 & 1 & 1 & 1 & \alpha+\beta & \beta & \beta & \beta \\
1 & 0 & 1 & 1 & \beta & \alpha+\beta & \beta & \beta \\
1 & 1 & 0 & 1 & \beta & \beta & \alpha+\beta & \beta \\
1 & 1 & 1 & 0 & \beta & \beta & \beta & \alpha+\beta \\
\hline
\alpha+\beta & \beta & \beta & \beta & 0 & -1 & -1 & -1 \\
\beta & \alpha+\beta & \beta & \beta & -1 & 0 & -1 & -1 \\
\beta & \beta & \alpha+\beta & \beta & -1 & -1 & 0 & -1 \\
\beta & \beta & \beta & \alpha+\beta & -1 & -1 & -1 & 0 
\end{array} \right]. \]
\renewcommand{\arraystretch}{1.0}
\end{example}

\section{A construction using doubly regular tournaments} \label{sec:drt}

In this section, we present an alternative construction of OMZDs for certain orders congruent to $3$ modulo $4$.  This differs from our earlier approach by being more combinatorial in nature, making use of the following class of objects.

\begin{defn} \label{defn:drt}
A {\em doubly regular tournament} of order $q$, denoted $\DRT(q)$, is an orientation of the complete graph on $q$ vertices, where each vertex has out-degree $k$, and for any pair of distinct vertices $u,w$ there are exactly $\lambda$ vertices with arcs from both $u$ and $w$.
\end{defn}

If follows from the definition that $k=(q-1)/2$ and $\lambda=(q-3)/4$, and thus that $q\equiv 3$~mod~$4$.  Furthermore, if $A$ is the adjacency matrix of a $\DRT(q)$, then $A+A^\mathrm{T}=J-I$ and 
\[ AA^\mathrm{T} = \frac{q+1}{4}I + \frac{q-3}{4}J. \]
It was shown by Reid and Brown in 1972~\cite{ReidBrown72} that a $\DRT(q)$ is equivalent to a {\em skew-Hadamard matrix} of order $q+1$: this is a matrix $H$ with entries $\pm 1$ which satisfies $HH^\mathrm{T}=(q+1)I$ and $H+H^\mathrm{T}=2I$.  We note that $H$ is a skew-Hadamard matrix if and only if $H-I$ is a skew-symmetric conference matrix.  It also follows that if $A$ is the adjacency matrix of a $\DRT(q)$, then it is the incidence matrix of a {\em skew-Hadamard $2$-design}; the relationship between Hadamard matrices and symmetric designs is discussed in detail in~\cite[Chapter~4]{Ionin06}.

The relevance of doubly regular tournaments to this paper is demonstrated in the next result.

\begin{theorem} \label{theorem:drt}
Suppose that $q\neq 3$ and there exists a doubly regular tournament of order $q$.  Then there exists an $\OMZD(q)$.
\end{theorem}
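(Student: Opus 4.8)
The plan is to build an $\OMZD(q)$ directly out of the adjacency matrix $A$ of the doubly regular tournament, by forming a suitable real linear combination $M = \gamma A + \delta A^{\mathrm T}$ (equivalently, since $A + A^{\mathrm T} = J - I$, a combination of $A$ and $J-I$). The key structural facts I would use are exactly the ones recorded just before the statement: $A + A^{\mathrm T} = J - I$, $AJ = JA = \frac{q-1}{2}J$, and $AA^{\mathrm T} = \frac{q+1}{4}I + \frac{q-3}{4}J$; transposing the last identity shows $A^{\mathrm T}A$ equals the same thing, so $A$ and $A^{\mathrm T}$ "commute up to a multiple of $J$" in a way that makes $MM^{\mathrm T}$ computable. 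Note the diagonal of $A$ is zero and every off-diagonal entry of $A$ is either $0$ or $1$ with $a_{ij} + a_{ji} = 1$; hence for generic coefficients $\gamma \neq \delta$ every off-diagonal entry of $M = \gamma A + \delta A^{\mathrm T}$ is either $\gamma$ or $\delta$, both nonzero, and the diagonal stays zero — so the pattern is automatically correct, and the whole problem reduces to choosing $\gamma,\delta$ to make $M$ orthogonal.

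First I would expand $MM^{\mathrm T} = (\gamma A + \delta A^{\mathrm T})(\gamma A^{\mathrm T} + \delta A) = \gamma^2 AA^{\mathrm T} + \delta^2 A^{\mathrm T}A + \gamma\delta(A^2 + (A^{\mathrm T})^2)$. The first two terms each contribute $(\gamma^2+\delta^2)\big(\frac{q+1}{4}I + \frac{q-3}{4}J\big)$. For the cross term, write $A^2 + (A^{\mathrm T})^2 = (A + A^{\mathrm T})^2 - (AA^{\mathrm T} + A^{\mathrm T}A) = (J-I)^2 - 2\big(\frac{q+1}{4}I + \frac{q-3}{4}J\big)$, and $(J-I)^2 = qJ - 2J + I = (q-2)J + I$. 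Collecting everything, $MM^{\mathrm T} = cI + dJ$ where $c$ and $d$ are explicit quadratic expressions in $\gamma,\delta$: specifically $c = \frac{q+1}{4}(\gamma^2+\delta^2) + \gamma\delta\big(1 - \frac{q+1}{2}\big)$ and $d = \frac{q-3}{4}(\gamma^2+\delta^2) + \gamma\delta\big(q-2 - \frac{q-3}{2}\big)$. Orthogonality requires $d = 0$, i.e. $\frac{q-3}{4}(\gamma^2+\delta^2) + \frac{q-1}{2}\gamma\delta = 0$. Dividing by $\delta^2$ and setting $t = \gamma/\delta$, this is the quadratic $(q-3)t^2 + 2(q-1)t + (q-3) = 0$, whose discriminant is $4(q-1)^2 - 4(q-3)^2 = 4\big((q-1)^2 - (q-3)^2\big) = 4(2)(2q-4) = 16(q-2) > 0$ for $q \geq 3$; so real roots $t = \frac{-(q-1) \pm 2\sqrt{q-2}}{q-3}$ exist (when $q=3$ the equation degenerates, which is consistent with the hypothesis $q\neq 3$, and also one should separately note the degenerate case $q - 3 = 0$ versus the quadratic coefficient vanishing). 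Choosing $\delta = 1$ and $\gamma = t$ then gives $MM^{\mathrm T} = cI$ with $c > 0$ (I would check $c>0$ for both roots, or just pick the root making it so), hence $M$ is orthogonal.

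The last thing to verify is that this choice keeps $\gamma \neq \delta$ and $\gamma,\delta$ both nonzero, so that the off-diagonal pattern of $M$ is genuinely all-nonzero. Since $\delta = 1 \neq 0$, I need $t \neq 0$ and $t \neq 1$. From the quadratic, $t = 0$ would force $q - 3 = 0$, excluded; $t = 1$ would force $(q-3) + 2(q-1) + (q-3) = 4q - 8 = 0$, i.e. $q = 2$, also excluded since $q \equiv 3 \bmod 4$. So for every admissible $q \neq 3$ both roots are nonzero and different from $1$, the matrix $M = tA + A^{\mathrm T}$ has zero diagonal and every off-diagonal entry equal to $t$ or $1$ (both nonzero), and $M$ is orthogonal — hence an $\OMZD(q)$. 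I expect the only mildly delicate point to be bookkeeping with the coefficients of $I$ and $J$ and confirming $c > 0$ for a suitable root; everything else is forced by the two-design-like identity $AA^{\mathrm T} = \frac{q+1}{4}I + \frac{q-3}{4}J$ and the tournament relation $A + A^{\mathrm T} = J - I$.
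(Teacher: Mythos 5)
Your $M=\gamma A+\delta A^{\mathrm T}$ with $\delta=1$ is precisely the paper's matrix $M=\alpha A+J-I$ rewritten via $J-I=A+A^{\mathrm T}$ (take $\gamma=\alpha+1$; your roots $t=\frac{-(q-1)\pm2\sqrt{q-2}}{q-3}$ are exactly the paper's $\alpha=\frac{-2}{q-3}\left(q-2\pm\sqrt{q-2}\right)$ shifted by $1$), so this is essentially the paper's proof and it is correct. Two small touch-ups: transposing $AA^{\mathrm T}=\frac{q+1}{4}I+\frac{q-3}{4}J$ merely returns the same identity, so to justify $A^{\mathrm T}A=AA^{\mathrm T}$ argue instead that $A^{\mathrm T}A=(J-I-A)A=JA-A-A^{2}=AJ-A-A^{2}=AA^{\mathrm T}$ (all in- and out-degrees equal $(q-1)/2$, so $JA=AJ=\frac{q-1}{2}J$), and your hedge about $c>0$ is unnecessary, since the diagonal entries of $MM^{\mathrm T}=cI$ are the squared row norms of the nonzero matrix $M$.
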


\begin{proof} Suppose that there exists a doubly regular tournament of order $q$ with adjacency matrix $A$.  Let $M=\alpha A + J-I$: our aim is to choose a suitable $\alpha$ so that $M$ is an $\OMZD(q)$.  First, we note that $M$ has zero diagonal; the off-diagonal entries are either $1$ or $1+\alpha$, so provided $\alpha\neq -1$ these are all nonzero.  So we choose $\alpha$ such that $MM^\mathrm{T}=cI$ for some $c$.  Since $A$ has $(q-1)/2$ entries of $1$ in each row and column, it follows that
\[ AJ = JA^\mathrm{T} = \frac{q-1}{2}J. \]
From this, we obtain
\begin{eqnarray*}
MM^\mathrm{T} & = & (\alpha A +J-I)(\alpha A^\mathrm{T} +J-I) \\
     & = & \alpha^2 AA^\mathrm{T} + \alpha(q-1)J - \alpha(A+A^\mathrm{T})+ (q-2)J +I\\
		 & = & \alpha^2\left( \frac{q+1}{4}I + \frac{q-3}{4}J \right) +\alpha(q-1)J-\alpha(J-I)+(q-2)J + I \\
		 & = & \left( \alpha^2 \frac{q+1}{4}-\alpha + 1\right)I + \left(\alpha^2 \frac{q-3}{4} + \alpha(q-2) + q-2 \right)J.
\end{eqnarray*}
By the quadratic formula, the coefficient of $J$ in the above will be zero precisely when
\[ \alpha = \frac{-2}{q-3}\left( q-2 \pm \sqrt{q-2} \right). \]
Since $q\equiv 3$~mod~$4$ and $q\neq 3$ we have that $q\geq 7$, so these values of $\alpha$ are real numbers.  Hence the matrix $M=\alpha A+J-I$ is an $\OMZD(q)$.
\end{proof}

There are several constructions of doubly regular tournaments in the literature, usually presented in terms of skew-Hadamard matrices: see the survey by Koukouvinos and Stylianou~\cite{Koukouvinos08} for details.  A conjecture of Seberry Wallis~\cite{Seberry71} (in terms of skew-Hadamard matrices) asserts that a $\DRT(q)$ exists if and only if $q\equiv 3$~mod~$4$; the smallest value for which no example is known is $q=275$ (see~\cite{Koukouvinos08}).  Two important constructions are in the following lemmas.

\begin{lemma} \label{lemma:paley}
Let $q$ be a prime power such that $q\equiv 3$~mod~$4$.  Then there exists a doubly regular tournament of order $q$.
\end{lemma}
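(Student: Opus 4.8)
The plan is to exhibit the doubly regular tournament explicitly using the quadratic residues of the finite field $\mathbb{F}_q$, i.e.\ via a Paley-type construction. Let $q\equiv 3$~mod~$4$ be a prime power, and let $\chi$ be the quadratic residue character on $\mathbb{F}_q$, extended by $\chi(0)=0$. Define a tournament $T$ on vertex set $\mathbb{F}_q$ by placing an arc from $u$ to $w$ precisely when $\chi(w-u)=1$. First I would check this is a well-defined orientation of the complete graph: since $q\equiv 3$~mod~$4$ we have $\chi(-1)=-1$, so for distinct $u,w$ exactly one of $\chi(w-u)$, $\chi(u-w)$ equals $1$, giving exactly one arc in each pair. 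Thus $T$ is a tournament, and it is clearly vertex-transitive under the translations $x\mapsto x+t$, so every vertex has the same out-degree $k=(q-1)/2$ (the number of nonzero squares).

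The heart of the argument is the double-regularity condition: for distinct $u,w$, the number $\lambda$ of vertices $z$ with arcs from both $u$ and $w$ should be independent of the pair. By translation-invariance it suffices to count, for a fixed nonzero $d$, the number of $z$ with $\chi(z)=1$ and $\chi(z-d)=1$ (after translating $u$ to $0$ and setting $d=w-u$). I would write this count using the standard identity $\tfrac14(1+\chi(z))(1+\chi(z-d))$ summed over $z\in\mathbb{F}_q\setminus\{0,d\}$, expand, and invoke the well-known character sum evaluation $\sum_{z\in\mathbb{F}_q}\chi(z)\chi(z-d) = -1$ for $d\neq 0$. A short bookkeeping of the boundary terms $z=0$ and $z=d$ then yields $\lambda=(q-3)/4$, which is a nonnegative integer exactly because $q\equiv 3$~mod~$4$. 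This confirms that $T$ is a $\DRT(q)$.

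The main obstacle — really the only nontrivial ingredient — is the character sum evaluation $\sum_z \chi(z)\chi(z-d)=-1$; everything else is routine counting. I would either cite this as a classical fact (it follows from the substitution $z=dt$ reducing the sum to $\sum_t\chi(t)\chi(t-1)$ and the standard Jacobi-sum argument, or from the orthogonality of $\chi$ against the principal character once one notes $\chi(z)\chi(z-d)=\chi(z^2-dz)$ and analyzes the fibers of $z\mapsto z^2-dz$), or note that double regularity of the Paley tournament is itself classical and refer to the survey~\cite{Koukouvinos08} or to~\cite{ReidBrown72}. Alternatively, one can bypass $T$ entirely: the Paley construction of Theorem~\ref{theorem:paleyconf} already gives a skew-symmetric conference matrix $C$ of order $q+1$, hence $H=C+I$ is a skew-Hadamard matrix of order $q+1$, and by the Reid--Brown equivalence~\cite{ReidBrown72} quoted above this is equivalent to a $\DRT(q)$. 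I would likely present the direct construction for self-containedness but remark on this shortcut.
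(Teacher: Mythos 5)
Your proposal is correct, but it takes a more self-contained route than the paper does. The paper does not construct the tournament at all: it simply observes that Lemma~\ref{lemma:paley} follows from Theorem~\ref{theorem:paleyconf} (Paley's construction of a skew-symmetric conference matrix $C$ of order $q+1$ for $q\equiv 3$ mod $4$), since $C+I$ is a skew-Hadamard matrix and the Reid--Brown equivalence~\cite{ReidBrown72} converts this into a $\DRT(q)$ --- exactly the ``shortcut'' you mention in your closing paragraph. Your primary argument instead builds the Paley tournament directly on $\mathbb{F}_q$ via the quadratic character $\chi$, and your verification is sound: $\chi(-1)=-1$ gives a well-defined tournament, translation-invariance gives out-degree $(q-1)/2$, and the expansion of $\frac14\sum_{z\neq 0,d}(1+\chi(z))(1+\chi(z-d))$ together with the classical evaluation $\sum_z\chi(z)\chi(z-d)=-1$ yields $\lambda=(q-3)/4$ (the two first-order character sums contribute $-\chi(d)$ and $+\chi(d)$ and cancel). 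What your approach buys is a proof that does not lean on Theorem~\ref{theorem:paleyconf} or on the Reid--Brown translation, at the cost of importing (or proving) the Jacobi-type character sum; the paper's approach buys brevity and consistency with the skew-Hadamard literature it cites, at the cost of being essentially a chain of citations. Either is acceptable; if you present the direct construction, you should either prove the character sum identity or give a precise reference for it, since it is the one genuinely nontrivial step.
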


Because of the equivalence between skew-symmetric conference matrices and skew-\linebreak Hadamard matrices, Lemma~\ref{lemma:paley} is implied by Theorem~\ref{theorem:paleyconf}, and thus by Paley's 1933 \linebreak paper \cite{Paley33}.

\begin{lemma}[Seberry Wallis~\cite{Seberry71a}; Reid and Brown~\cite{ReidBrown72}] \label{lemma:doubling}
Suppose that there exists a doubly regular tournament of order $q$.  Then there exists a doubly regular tournament of order $2q+1$.
\end{lemma}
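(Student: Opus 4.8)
The plan is to give an explicit construction at the level of adjacency matrices of tournaments (equivalently, one could phrase it via skew-Hadamard matrices, but the tournament picture is cleaner here). Let $A$ be the $q\times q$ adjacency matrix of a $\DRT(q)$, so that $A+A^{\mathrm T}=J-I$ and $AA^{\mathrm T}=\frac{q+1}{4}I+\frac{q-3}{4}J$. I want to build a $(2q+1)\times(2q+1)$ matrix $A'$, with $0/1$ entries, zero diagonal, satisfying $A'+A'^{\mathrm T}=J-I$ and $A'A'^{\mathrm T}=\frac{(2q+1)+1}{4}I+\frac{(2q+1)-3}{4}J=\frac{q+1}{2}I+\frac{q-1}{2}J$; any such matrix is the adjacency matrix of a $\DRT(2q+1)$ by the characterization recalled before Theorem~\ref{theorem:drt}.

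The construction I would use is the standard ``doubling'' block form. Writing $j$ for the all-ones column vector of length $q$ and $\mathbf{0}$ for the zero vector, set
\[
A' = \left[ \begin{array}{c|c|c}
0 & j^{\mathrm T} & \mathbf{0}^{\mathrm T} \\ \hline
\mathbf{0} & A & A+I \\ \hline
j & A^{\mathrm T} & A
\end{array} \right],
\]
or a suitable variant of this shape (the exact placement of the $j$'s and of $A$ versus $A^{\mathrm T}$ in the off-diagonal blocks may need adjusting so that both the skew condition and the product condition hold simultaneously). First I would check that $A'$ has zero diagonal and $0/1$ entries: the diagonal blocks $A$ have zero diagonal since $A$ does, and $A+I$ has $0/1$ entries because $A$ has no $1$ on its diagonal. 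Next I would verify $A'+A'^{\mathrm T}=J_{2q+1}-I_{2q+1}$ blockwise, which reduces to the identity $A+A^{\mathrm T}=J_q-I_q$ together with the fact that the $(1,2)$ and $(2,1)$ blocks (and the $(1,3)$, $(3,1)$ blocks) are complementary $\{0,1\}$-vectors, and that the $(2,3)$ block $A+I$ and the $(3,2)$ block $A^{\mathrm T}$ sum to $A+A^{\mathrm T}+I=J_q$. Then the main computation is $A'A'^{\mathrm T}$: expanding the block product and repeatedly substituting $AA^{\mathrm T}=\frac{q+1}{4}I+\frac{q-3}{4}J$, $AJ=JA^{\mathrm T}=\frac{q-1}{2}J$, $A^{\mathrm T}J=JA=\frac{q-1}{2}J$, and $JJ=qJ$, every block should collapse to the correct multiple of $I$ and $J$, giving $\frac{q+1}{2}I_{2q+1}+\frac{q-1}{2}J_{2q+1}$.

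The main obstacle is purely bookkeeping: getting the block layout exactly right so that the skew condition and the Gram condition hold at the same time, since a naive choice of blocks typically satisfies one but not the other, and the cross terms involving $AJ$, $JA$, and $J^2$ must cancel precisely. Once the correct arrangement is fixed, the verification is a routine (if somewhat lengthy) matrix identity using only the two defining equations of a doubly regular tournament, and no case analysis or parity argument is needed. I would present the final $A'$ explicitly and then display the block expansion of $A'A'^{\mathrm T}$ with the substitutions carried out, noting that no zero entries off the diagonal arise because all off-diagonal blocks have entries in $\{0,1\}$ with no all-zero constraint violated, which is automatic from $A+A^{\mathrm T}=J-I$.
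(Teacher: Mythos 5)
Your overall plan is the right one, and indeed it is essentially the argument behind the cited references (the paper itself gives no proof of this lemma, it simply quotes Seberry Wallis and Reid--Brown, so your write-up has to stand on its own). However, as written it has a genuine gap, and the part you defer as ``bookkeeping'' is exactly where the construction lives. With your displayed $A'$, the $(2,3)$ block of $A'+A'^{\mathrm T}$ is $(A+I)+\bigl(A^{\mathrm T}\bigr)^{\mathrm T}=2A+I\neq J$, so skew-symmetry already fails. Worse, the class of repairs you allow for---moving the $j$'s and swapping $A$ with $A^{\mathrm T}$ \emph{in the off-diagonal blocks only}, keeping both diagonal blocks equal to $A$---does not visibly contain a fix: the only $(3,2)$ block restoring skew-symmetry against $(2,3)=A+I$ is $A$, and then the $(2,3)$ block of $A'A'^{\mathrm T}$ becomes $AA^{\mathrm T}+(A+I)A^{\mathrm T}=2AA^{\mathrm T}+A^{\mathrm T}$, which is not a multiple of $J$. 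The missing idea is structural, not notational: the tournament induced on the second copy of the vertex set must be \emph{reversed}.

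The standard doubling is (with $j$ the all-ones column vector of length $q$)
\[
A' \;=\; \left[\begin{array}{c|c|c} 0 & j^{\mathrm T} & \mathbf{0}^{\mathrm T} \\ \hline \mathbf{0} & A & A+I \\ \hline j & A & A^{\mathrm T} \end{array}\right],
\]
so the new vertex dominates the first copy and is dominated by the second, the first copy carries $A$, the second carries $A^{\mathrm T}$, and the blocks between the copies are $A+I$ and $A$. Skew-symmetry now holds because $(A+I)+A^{\mathrm T}=J$ and each diagonal block satisfies $A+A^{\mathrm T}=J-I$; the Gram computation then collapses exactly as you intend, using $AA^{\mathrm T}=\tfrac{q+1}{4}I+\tfrac{q-3}{4}J$, $AJ=JA=\tfrac{q-1}{2}J$, and the consequent normality $A^{\mathrm T}A=AA^{\mathrm T}$ (from $A^{\mathrm T}=J-I-A$): the $(2,2)$ block is $2AA^{\mathrm T}+(A+A^{\mathrm T})+I=\tfrac{q+1}{2}I+\tfrac{q-1}{2}J$, the $(3,3)$ block is $J+AA^{\mathrm T}+A^{\mathrm T}A$, which is the same, and the $(2,3)$ block is $AA^{\mathrm T}+(A+I)A=AA^{\mathrm T}+\bigl(\tfrac{q-1}{2}J-AA^{\mathrm T}\bigr)=\tfrac{q-1}{2}J$, since $A^{2}=A(J-I-A^{\mathrm T})=\tfrac{q-1}{2}J-A-AA^{\mathrm T}$; the border blocks give $j^{\mathrm T}A^{\mathrm T}=\tfrac{q-1}{2}j^{\mathrm T}$ and $j^{\mathrm T}j=q$. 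Your reduction to the two identities $A'+A'^{\mathrm T}=J-I$ and $A'A'^{\mathrm T}=\tfrac{q+1}{2}I+\tfrac{q-1}{2}J$ is legitimate (the diagonal of $A'A'^{\mathrm T}$ gives regularity and the off-diagonal gives the constant $\lambda$), so with the corrected matrix the proof is complete; without it, what you have is a plan rather than a proof.
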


By applying Theorem~\ref{theorem:drt} to the doubly regular tournaments obtained from Lemma~\ref{lemma:paley}, then recursively applying Lemma~\ref{lemma:doubling} to that, we obtain the following result.

\begin{corollary} \label{corollary:paley}
Let $q$ be a prime power such that $q\equiv 3$~mod~$4$ and $q\geq 7$.  Then for any $t\geq 0$, there exists an $\OMZD(2^t(q+1)-1)$.
\end{corollary}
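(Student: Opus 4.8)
The plan is to chain together three results already established in the excerpt. First I would invoke Lemma~\ref{lemma:paley} to obtain a doubly regular tournament of order $q$ from the prime power hypothesis $q\equiv 3$~mod~$4$. Since $q\geq 7$, we have $q\neq 3$, so Theorem~\ref{theorem:drt} applies and yields an $\OMZD(q)$; equivalently, an $\OMZD(2^0(q+1)-1)$, which is the $t=0$ case. This disposes of the base case of an induction on $t$.

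For the inductive step, suppose $t\geq 1$ and that there exists an $\OMZD(2^{t-1}(q+1)-1)$. The idea is to run Lemma~\ref{lemma:doubling} on the tournament side rather than on the OMZD side. Starting from the $\DRT(q)$ of Lemma~\ref{lemma:paley} and applying Lemma~\ref{lemma:doubling} a total of $t$ times, we obtain a doubly regular tournament whose order $q_t$ satisfies the recurrence $q_0 = q$, $q_{i+1} = 2q_i + 1$; solving this gives $q_t = 2^t(q+1) - 1$. Since $q\geq 7$ forces $q_t \geq 7 > 3$, Theorem~\ref{theorem:drt} applies once more to this $\DRT(q_t)$, producing an $\OMZD(q_t) = \OMZD(2^t(q+1)-1)$, as required. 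The induction then closes.

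Strictly speaking, the cleanest write-up need not even phrase this as an induction on the OMZD side: one simply observes that the family of orders $\{2^t(q+1)-1 : t\geq 0\}$ is exactly the family of orders of doubly regular tournaments obtained by iterating the doubling construction (Lemma~\ref{lemma:doubling}) on a Paley tournament (Lemma~\ref{lemma:paley}), and then Theorem~\ref{theorem:drt} converts each such tournament into an OMZD of the same order. The only small point to verify is the arithmetic identity for the iterated orders, $q_t = 2^t(q+1) - 1$, which follows by an easy induction since $2(2^i(q+1)-1) + 1 = 2^{i+1}(q+1) - 1$.

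Honestly, there is no real obstacle here — the corollary is a packaging of prior results, and the only thing that could go wrong is an off-by-one slip in the order formula or forgetting to check that the hypothesis $q\neq 3$ of Theorem~\ref{theorem:drt} is inherited at every stage. The condition $q\geq 7$ in the statement is precisely what guarantees this (and also what Theorem~\ref{theorem:drt} needs for $\alpha$ to be real), so I would make a point of noting explicitly that $2^t(q+1)-1 \geq q \geq 7$ for all $t\geq 0$, and then the proof is essentially a one-sentence composition.
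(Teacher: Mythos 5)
Your proposal is correct and follows essentially the same route as the paper: obtain a $\DRT(q)$ from Lemma~\ref{lemma:paley}, iterate the doubling of Lemma~\ref{lemma:doubling} to get a $\DRT(2^t(q+1)-1)$, and convert via Theorem~\ref{theorem:drt}, noting that $2^t(q+1)-1\geq q\geq 7>3$ so the hypothesis of that theorem holds at every stage. The paper states this in one sentence; your write-up simply makes the order recurrence $q_{i+1}=2q_i+1$ and the check $q_t\neq 3$ explicit.
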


\begin{example} \label{example:fano} \rm
We show how to obtain an $\OMZD(7)$ using the approach of Theorem~\ref{theorem:drt}.  The following matrix is the adjacency matrix of the unique doubly regular tournament of order~$7$:
\[ A = \begin{bmatrix}
0 & 1 & 1 & 0 & 1 & 0 & 0 \\
0 & 0 & 1 & 1 & 0 & 1 & 0 \\
0 & 0 & 0 & 1 & 1 & 0 & 1 \\
1 & 0 & 0 & 0 & 1 & 1 & 0 \\
0 & 1 & 0 & 0 & 0 & 1 & 1 \\
1 & 0 & 1 & 0 & 0 & 0 & 1 \\
1 & 1 & 0 & 1 & 0 & 0 & 0 
\end{bmatrix}. \]
(We remark that the corresponding skew-Hadamard $2$-design is the Fano plane.)
Now let $\alpha=-\frac{1}{2}(5-\sqrt{5})$.  Then the matrix $M$ below is an $\OMZD(7)$:
\[ M = \alpha A + J-I = \begin{bmatrix}
0 & \alpha+1 & \alpha+1 & 1 & \alpha+1 & 1 & 1 \\
1 & 0 & \alpha+1 & \alpha+1 & 1 & \alpha+1 & 1 \\
1 & 1 & 0 & \alpha+1 & \alpha+1 & 1 & \alpha+1 \\
\alpha+1 & 1 & 1 & 0 & \alpha+1 & \alpha+1 & 1 \\
1 & \alpha+1 & 1 & 1 & 0 & \alpha+1 & \alpha+1 \\
\alpha+1 & 1 & \alpha+1 & 1 & 1 & 0 & \alpha+1 \\
\alpha+1 & \alpha+1 & 1 & \alpha+1 & 1 & 1 & 0 
\end{bmatrix} \]
(one can verify that $MM^\mathrm{T}=cI$, where $c=(27-9\sqrt{5})/2$).
\end{example}

By applying the ``doubling construction'' of Lemma~\ref{lemma:doubling}, we can also obtain an $\OMZD(15)$ starting from the same matrix~$A$ as in Example~\ref{example:fano}.

\section{Application: distinct eigenvalues of graphs} \label{sec:eigenvalues}

Suppose $A$ is an $n\times n$ real symmetric matrix.  The {\em graph of $A$} is the simple graph on $n$ vertices labelled $1,\ldots,n$, with $i$ adjacent to $j$ if and only if $A_{ij}\neq 0$ (for $i\neq j$).  Conversely, for any simple graph $G$ with $n$ vertices, one can associate with $G$ the class of $n\times n$ matrices $\mathcal{S}(G)$, consisting of all real symmetric matrices whose graph is $G$.  We note that the definition of the graph of $A$ ignores its diagonal entries, so the diagonal entries of any matrix in $\mathcal{S}(G)$ are arbitrary.

Much recent research has been devoted to such classes of matrices (see, for instance, the surveys by Fallat and Hogben~\cite{FallatHogben1,FallatHogben2}).  In particular, in \cite{dmrg,Barrett,Duarte02,Fonseca10,KimShader09} the following question is considered: for a given graph $G$, what is the minimum number of distinct eigenvalues of a matrix in $\mathcal{S}(G)$?  This parameter is denoted $q(G)$.

It is not difficult to see that $q(G)=1$ if and only if the graph $G$ has no edges (see \cite[Lemma 2.1]{dmrg}).  So a natural question is to ask which graphs $G$ have $q(G)=2$.  Various results on this question were obtained by Ahmadi {\em et al.}\ in \cite{dmrg}: for instance, they showed that complete graphs $K_n$, complete bipartite graphs $K_{n,n}$ and hypercubes $Q_n$ all have $q(G)=2$.  As observed in \cite[Section~4]{dmrg}, the property that $q(G)=2$ for a non-null graph $G$ is equivalent to the existence of an orthogonal matrix in $\mathcal{S}(G)$.

The parameter $q(G)$ is not necessarily well-behaved when it comes to deleting edges from a graph $G$.  For instance, for a cycle $C_n$ on $n$ vertices we have $q(C_n) = \lceil n/2 \rceil$ (see \cite[Lemma 2.7]{dmrg}); if we delete an edge to obtain the path $P_n$ on $n$ vertices, we then have $q(P_n)=n$ (see \cite[Theorem 3.1]{Fonseca10}).  However, the deletion of an edge can also cause the minimum number of distinct eigenvalues of a graph to decrease: compare the examples in \cite[Figs.\ 6.1, 6.2]{dmrg}).  In fact, the difference between $q(G)$ and $q(G-e)$ can be made arbitrarily large in either direction.

\subsection{Bipartite graphs} \label{sec:bipartite}

For a bipartite graph $G$ with bipartition $X\cup Y$, where $X=\{1,\ldots,m\}$ and $Y=\{1',\ldots,n'\}$, we let $\mathcal{B}(G)$ be the class of $m\times n$ real matrices $B$, whose rows and columns are indexed by $X$ and $Y$ respectively, and where $B_{ij}\neq 0$ if and only if $\{i,j'\}$ is an edge of $G$.  Clearly, if $B\in \mathcal{B}(G)$, then the matrix%
\renewcommand{\arraystretch}{1.25}%
\[ A = \left[ \begin{array}{c|c} 0 & B \\ \hline B^\mathrm{T} & 0 \end{array} \right] \]%
\renewcommand{\arraystretch}{1.0}%
is in $\mathcal{S}(G)$.  In \cite[Section~6]{dmrg}, Ahmadi {\em et al.}\ showed the following, which provides a connection with orthogonal matrices.

\begin{theorem}[Ahmadi {\em et al.}\ \cite{dmrg}] \label{theorem:dmrg}
Suppose $G$ is a bipartite graph with bipartition $X\cup Y$.  Then \mbox{$q(G)=2$} if and only if $|X|=|Y|$ and there exists an orthogonal matrix $B\in \mathcal{B}(G)$.
\end{theorem}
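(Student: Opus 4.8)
The plan is to prove the two implications separately, exploiting the fact recalled above that, for a non-null graph, $q(G)=2$ holds precisely when $\mathcal{S}(G)$ contains an orthogonal matrix. Since any member of $\mathcal{S}(G)$ is symmetric, such an orthogonal matrix $A$ satisfies $A^2 = AA^{\mathrm{T}} = cI$ with $c>0$, and after rescaling I may assume $A^2 = I$, so that the eigenvalues of $A$ lie in $\{+1,-1\}$.

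For the reverse implication I would argue as follows: given $|X|=|Y|=n$ and an orthogonal $B\in\mathcal{B}(G)$, rescale so that $BB^{\mathrm{T}}=I_n$ (which, $B$ being square, also gives $B^{\mathrm{T}}B=I_n$), and set $A=\left[\begin{smallmatrix}0&B\\ B^{\mathrm{T}}&0\end{smallmatrix}\right]$. Then $A\in\mathcal{S}(G)$ and $A^2=I_{2n}$, so the eigenvalues of $A$ are $\pm 1$; as $A$ has zero trace and is nonzero, both values occur, whence $q(G)\le 2$. Combined with the fact that $q(G)\ge 2$ — since an orthogonal $B$ is nonzero, $G$ has an edge and so $q(G)\neq 1$ by \cite[Lemma~2.1]{dmrg} — this gives $q(G)=2$.

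The forward implication is where the work lies. Assuming $q(G)=2$, the graph is non-null, so there is an orthogonal $A\in\mathcal{S}(G)$, which I rescale so that $A^2=I$. Partitioning $A$ according to the bipartition gives
\[ A = \left[ \begin{array}{c|c} D_X & B \\ \hline B^{\mathrm{T}} & D_Y \end{array} \right], \]
where $D_X$ and $D_Y$ are real \emph{diagonal} matrices indexed by $X$ and $Y$ (otherwise unconstrained, since $\mathcal{S}(G)$ says nothing about the diagonal) and $B\in\mathcal{B}(G)$. Expanding $A^2=I$ yields $D_X^2 + BB^{\mathrm{T}} = I$, $B^{\mathrm{T}}B + D_Y^2 = I$, and the off-diagonal relation $D_X B + B D_Y = 0$. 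The last equation, read entrywise, says $\big((D_X)_{ii} + (D_Y)_{jj}\big)B_{ij}=0$, so $(D_X)_{ii}=-(D_Y)_{jj}$ on every edge $\{i,j'\}$ of $G$; hence the function equal to $(D_X)_{ii}$ on $i\in X$ and to $-(D_Y)_{jj}$ on $j'\in Y$ is constant on each connected component. Reducing to a single component (the rest follows componentwise), I then have $D_X=tI$ and $D_Y=-tI$ for one scalar $t$, so $BB^{\mathrm{T}}=(1-t^2)I=B^{\mathrm{T}}B$ on the relevant blocks; as the component has an edge, $BB^{\mathrm{T}}\neq 0$ and so $1-t^2>0$. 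Then $\operatorname{rank}(BB^{\mathrm{T}})=\operatorname{rank}(B^{\mathrm{T}}B)$ forces $|X|=|Y|$, and $(1-t^2)^{-1/2}B$ is an orthogonal matrix still lying in $\mathcal{B}(G)$ (rescaling preserves the zero/nonzero pattern); reassembling the components completes the argument.

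The main obstacle I anticipate is precisely the freedom in the diagonal: because membership in $\mathcal{S}(G)$ constrains only the off-diagonal pattern, one cannot assume at the start that $A$ has the ``clean'' form $\left[\begin{smallmatrix}0&B\\ B^{\mathrm{T}}&0\end{smallmatrix}\right]$, and the diagonal blocks $D_X,D_Y$ must be controlled. The key idea that makes this tractable is to look at the \emph{off-diagonal} block of the identity $A^2=I$, namely $D_X B + B D_Y = 0$: once this shows the diagonal blocks are scalar on each component, the squareness of $B$ and its orthogonality (up to a scalar) drop out of the rank identity $\operatorname{rank}(BB^{\mathrm{T}})=\operatorname{rank}(B^{\mathrm{T}}B)$.
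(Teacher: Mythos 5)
A preliminary remark: the paper itself offers no proof of this statement --- it is quoted verbatim from Ahmadi \emph{et al.}~\cite{dmrg} --- so your proposal can only be judged on its own terms. Your reverse direction is correct and standard: with $BB^{\mathrm{T}}=I_n$ the matrix $\left[\begin{smallmatrix}0&B\\ B^{\mathrm{T}}&0\end{smallmatrix}\right]$ lies in $\mathcal{S}(G)$, squares to $I_{2n}$, and has zero trace, so it has exactly the eigenvalues $\pm1$, while $q(G)\geq 2$ because $G$ has an edge.

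The forward direction has a genuine gap, located exactly at ``reducing to a single component (the rest follows componentwise).'' Your analysis of $D_XB+BD_Y=0$ together with the rank identity does show, for every component that \emph{contains an edge}, that its two sides have equal size and that its block of $B$ is orthogonal after a scaling (and reassembly is legitimate, since $\mathcal{B}(G)$ constrains only the pattern, so each component's block may be rescaled independently --- a point worth making explicit). But a component consisting of an isolated vertex contributes only a diagonal entry $\pm1$, no block of $B$, and nothing to the rank count, so $|X|=|Y|$ does not follow --- and indeed it can fail. Take $G$ to be a single edge plus one isolated vertex: the matrix $\left[\begin{smallmatrix}0&1&0\\ 1&0&0\\ 0&0&1\end{smallmatrix}\right]$ lies in $\mathcal{S}(G)$ and has spectrum $\{1,-1\}$, so $q(G)=2$, yet every bipartition of $G$ has parts of sizes $2$ and $1$, and $\mathcal{B}(G)$ (a $2\times 1$ pattern with a forced zero) contains no orthogonal matrix. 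So the statement, read literally for an arbitrary bipartite graph, is false; it requires $G$ to be connected, or at least to have no isolated vertices, which is the setting in which the result is actually invoked in this paper (the graphs $G_n$ and $G_{n,k}$ with $n\geq 2$ have minimum degree at least $1$). Under that extra hypothesis your argument is sound; as written, the componentwise reduction silently assumes every component meets both $X$ and $Y$, which is precisely what can fail.
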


A straightforward consequence of this theorem is the following.

\begin{corollary}[Ahmadi {\em et al.}\ \cite{dmrg}] \label{corollary:Knn}
For the complete bipartite graph $K_{n,n}$, $q(K_{n,n})=2$.  
\end{corollary}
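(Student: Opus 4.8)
The plan is to apply Theorem~\ref{theorem:dmrg} directly: since $K_{n,n}$ is bipartite with parts $X$ and $Y$ of equal size $n$, it suffices to exhibit a single orthogonal matrix $B\in\mathcal{B}(K_{n,n})$. By definition of $\mathcal{B}(K_{n,n})$, every pair $\{i,j'\}$ is an edge, so the requirement is simply that $B$ is an $n\times n$ orthogonal matrix with \emph{all} entries nonzero. First I would note that $q(K_{n,n})\geq 2$ because $K_{n,n}$ has at least one edge (for $n\geq 1$), so the whole content is the construction of such a $B$ together with the conclusion $q(K_{n,n})\leq 2$.

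The construction is straightforward, and there are several options. The cleanest is probably to take $B = 2J/n - I$, where $J$ is the $n\times n$ all-ones matrix; then $BB^\mathrm{T} = (4/n^2)J^2 - (4/n)J + I = (4/n)J - (4/n)J + I = I$, using $J^2 = nJ$, so $B$ is orthogonal, and its entries are $2/n - 1$ on the diagonal and $2/n$ off the diagonal, all of which are nonzero provided $n\neq 2$. For $n=1$ one takes $B=[1]$, and for $n=2$ one can instead take $B = \frac{1}{\sqrt{2}}\begin{bmatrix} 1 & 1 \\ 1 & -1 \end{bmatrix}$ (or simply observe $K_{2,2}=C_4$ and cite $q(C_4)=2$ from the earlier discussion). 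Alternatively, and perhaps more in the spirit of this paper, one could observe that any conference matrix or OMZD of order $n$ with its diagonal zeros replaced by suitable nonzero values would work, but that is overkill: an all-nonzero orthogonal matrix is trivially produced by the $2J/n - I$ recipe.

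I expect there to be essentially no obstacle here — the result is labelled a ``straightforward consequence'' and that is accurate. The only thing to be careful about is the small-order exceptions ($n=1,2$) where $2J/n-I$ has a zero entry, and ensuring the trivial lower bound $q(K_{n,n})\geq 2$ is stated (which holds since a graph with an edge cannot have $q=1$, by \cite[Lemma~2.1]{dmrg}). So the write-up is: invoke Theorem~\ref{theorem:dmrg}, note $|X|=|Y|=n$, exhibit the orthogonal $B\in\mathcal{B}(K_{n,n})$ (handling $n=2$ separately), and conclude $q(K_{n,n})=2$.
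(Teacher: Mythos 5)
Your proposal is correct and matches the paper's proof essentially exactly: the paper also invokes Theorem~\ref{theorem:dmrg}, notes that $\mathcal{B}(K_{n,n})$ consists of all nowhere-zero $n\times n$ matrices, uses $I-\frac{2}{n}J$ (the negative of your $\frac{2}{n}J-I$) for $n\geq 3$, and dismisses $n\leq 2$ as trivial. Your extra care with the $n=2$ case and the lower bound $q\geq 2$ is sound and consistent with the paper's (terser) treatment.
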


\begin{proof}  Since $\mathcal{B}(K_{n,n})$ consists of all $n\times n$ matrices with no zero entries, all one requires is an orthogonal matrix with no zeroes; for $n\leq 2$ this is trivial, while for $n\geq 3$, the matrix $I-\frac{2}{n}J$ is orthogonal. \end{proof}

The main result of this subsection gives an application of OMZDs to the following class of graphs.  We denote by $G_n$ the bipartite graph obtained by deleting a perfect matching from $K_{n,n}$.

\begin{theorem} \label{theorem:application}
Where $G_n$ is as above, $q(G_n)=2$ unless $n=1$ or $n=3$.
\end{theorem}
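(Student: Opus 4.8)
The plan is to reduce the statement directly to Theorem~\ref{theorem:existence} by way of Theorem~\ref{theorem:dmrg}. First I would unpack the structure of $G_n$: fixing the bipartition $X=\{1,\dots,n\}$, $Y=\{1',\dots,n'\}$ of $K_{n,n}$ and taking (without loss of generality, after relabelling) the deleted perfect matching to be $\{\{i,i'\} : 1\le i\le n\}$, the edges of $G_n$ are exactly the pairs $\{i,j'\}$ with $i\neq j$. Consequently a matrix $B$ lies in $\mathcal{B}(G_n)$ if and only if $B$ is $n\times n$ with $B_{ii}=0$ for all $i$ and $B_{ij}\neq 0$ for all $i\neq j$; that is, $\mathcal{B}(G_n)$ is precisely the set of $n\times n$ matrices with zero diagonal and all off-diagonal entries nonzero. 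In particular, an orthogonal matrix in $\mathcal{B}(G_n)$ is exactly an $\OMZD(n)$.

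Next I would invoke Theorem~\ref{theorem:dmrg}: since $|X|=|Y|=n$, we have $q(G_n)=2$ if and only if $\mathcal{B}(G_n)$ contains an orthogonal matrix, i.e.\ if and only if an $\OMZD(n)$ exists. By Theorem~\ref{theorem:existence}, this holds if and only if $n\notin\{1,3\}$, which already establishes $q(G_n)=2$ for every such $n$.

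It then remains to confirm that the two exceptional values really are exceptions. For $n=1$, the graph $G_1$ has two vertices and no edges, so $q(G_1)=1\neq 2$. For $n=3$, $G_3$ is $K_{3,3}$ with a perfect matching removed, which is isomorphic to the $6$-cycle $C_6$; since $G_3$ has at least one edge, $q(G_3)\geq 2$, while by the equivalence above $q(G_3)=2$ would force the existence of an $\OMZD(3)$, which is impossible (as noted in Section~\ref{sec:intro}), so in fact $q(G_3)\geq 3$, consistent with $q(C_6)=\lceil 6/2\rceil=3$. Hence $q(G_n)=2$ precisely when $n\neq 1,3$.

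I do not expect a genuine obstacle here: the substantive content is entirely supplied by Theorem~\ref{theorem:existence} and Theorem~\ref{theorem:dmrg}, and the only care required is the bookkeeping identification of $\mathcal{B}(G_n)$ with the class of zero-diagonal, nonzero-off-diagonal matrices, together with the treatment of the two small cases where $G_n$ either has no edges at all or would demand a non-existent $\OMZD(3)$.
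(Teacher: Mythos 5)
Your proposal is correct and follows essentially the same route as the paper: identify $\mathcal{B}(G_n)$ with the zero-diagonal, nonzero-off-diagonal matrices, then combine Theorem~\ref{theorem:dmrg} with Theorem~\ref{theorem:existence}. Your extra verification that $n=1,3$ genuinely fail matches the remarks the paper makes immediately after its proof, so nothing is missing or superfluous.
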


\begin{proof} Suppose that $G_n$ has bipartition $X\cup Y$, where $X=\{1,\ldots,n\}$ and $Y=\{1',\ldots,n'\}$, and that the perfect matching that was deleted was the canonical one, i.e.\ $\{i,i'\}$ for $1\leq i \leq n$.  Then any matrix in $\mathcal{B}(G_n)$ has zero diagonal and nonzero entries elsewhere, so any orthogonal matrix in $\mathcal{B}(G_n)$ is an $\OMZD(n)$.  By Theorem~\ref{theorem:existence}, such a matrix exists for all $n\neq 1,\ 3$.  The result then follows from Theorem~\ref{theorem:dmrg}.
\end{proof}

Note that the graph $G_1$ has no edges and thus $q(G_1)=1$, while the graph $G_3$ is a cycle on six vertices $C_6$, and the non-existence of an $\OMZD(3)$ corresponds to the known result that $q(C_6)=3$ (cf.\ \cite[Lemma 2.7]{dmrg}).  

By Corollary~\ref{corollary:Knn}, we know that $q(K_{n,n})=2$; thus Theorem~\ref{theorem:application} gives examples of graphs where deleting a set of edges does not change the value of $q(G)$.  However, to obtain $G_n$ from $K_{n,n}$ one must delete several edges simultaneously; a natural question is to ask what happens to $q(G)$ if one deletes these edges sequentially, one at a time.  In this situation, to show that $q(G)$ remains equal to $2$ each time an edge is deleted, the following class of matrices may be used.

\subsection{Orthogonal matrices with partially-zero diagonal} \label{sec:partially-zero}

We define an {\em orthogonal matrix with partially-zero diagonal}, $\OMPZD(n,k)$, to be an $n\times n$ orthogonal matrix with exactly $k$ zero entries, all of which are on the main diagonal.  Clearly an $\OMZD(n)$ is an $\OMPZD(n,n)$.  On the other hand, an $\OMPZD(n,0)$ is an orthogonal matrix with no zero entries, which we observed in Corollary~\ref{corollary:Knn} exists for all $n\geq 1$.  The connection with bipartite graphs is given by the following result, whose proof is essentially the same as that of Theorem~\ref{theorem:application}.

\begin{theorem} \label{theorem:arbitrarymatching}
Let $G_{n,k}$ denote the bipartite graph obtained by deleting a matching of size $k$ from $K_{n,n}$.  Then $q(G_{n,k})=2$ if and only if there exists an $\OMPZD(n,k)$.
\end{theorem}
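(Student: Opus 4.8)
The plan is to mimic the argument used for Theorem~\ref{theorem:application}, but now keeping track of where the zero entries of a matrix in $\mathcal{B}(G_{n,k})$ sit. First I would fix a concrete description of the graph: $G_{n,k}$ has bipartition $X\cup Y$ with $X=\{1,\ldots,n\}$, $Y=\{1',\ldots,n'\}$, and without loss of generality the deleted matching is $\{i,i'\}$ for $1\le i\le k$. Then any $B\in\mathcal{B}(G_{n,k})$ has $B_{ii}=0$ for $1\le i\le k$ and every other entry (including $B_{ii}$ for $i>k$) nonzero; that is, $B$ is exactly an $n\times n$ orthogonal matrix with $k$ zero entries, all on the diagonal, precisely an $\OMPZD(n,k)$ — here I use that an orthogonal matrix, as defined in this paper, only needs $BB^{\mathrm{T}}=cI$, so rescaling is harmless. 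Conversely, given any $\OMPZD(n,k)$, simultaneously permuting its rows and columns by the same permutation preserves orthogonality and keeps the zeros on the diagonal, so we may move the $k$ diagonal zeros into the first $k$ positions, yielding a matrix in $\mathcal{B}(G_{n,k})$ (after the relabelling above). This gives the equivalence ``$\mathcal{B}(G_{n,k})$ contains an orthogonal matrix $\iff$ an $\OMPZD(n,k)$ exists.''

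Next I would invoke Theorem~\ref{theorem:dmrg}: since $G_{n,k}$ is bipartite with $|X|=|Y|=n$, we have $q(G_{n,k})=2$ if and only if there is an orthogonal matrix in $\mathcal{B}(G_{n,k})$. Chaining this with the equivalence from the previous paragraph yields $q(G_{n,k})=2\iff$ there exists an $\OMPZD(n,k)$, which is the claim. One small point worth addressing explicitly: Theorem~\ref{theorem:dmrg} already builds in the condition $|X|=|Y|$, and for $G_{n,k}$ both parts have size $n$, so that hypothesis is automatic and no separate case analysis is needed; I would note this so the reader sees why the ``$|X|=|Y|$'' clause of Theorem~\ref{theorem:dmrg} does not produce an extra condition here.

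The only genuinely non-routine step is the reduction of an arbitrary $\OMPZD(n,k)$ to the canonical form matching $\mathcal{B}(G_{n,k})$ — i.e.\ checking that a \emph{simultaneous} row-and-column permutation suffices to move all $k$ zeros to the first $k$ diagonal positions, and that this operation both preserves orthogonality and does not create or destroy any zeros. This is essentially the observation that conjugating by a permutation matrix, $B\mapsto PBP^{\mathrm{T}}$, sends diagonal entries to diagonal entries and off-diagonal to off-diagonal, so a single permutation sorting the indices $\{i:B_{ii}=0\}$ to $\{1,\ldots,k\}$ does the job; it mirrors the ``simultaneously permuting rows and columns'' equivalence used for conference matrices in the introduction. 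Everything else is a direct citation of Theorem~\ref{theorem:dmrg} together with the unwinding of the definitions of $\mathcal{B}(G)$ and $\OMPZD(n,k)$, so the proof should be short — indeed the excerpt already flags that it is ``essentially the same as that of Theorem~\ref{theorem:application}.''
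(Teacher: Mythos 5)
Your proposal is correct and follows exactly the route the paper intends: unwind the definition of $\mathcal{B}(G_{n,k})$ to see that its orthogonal members are precisely (after a harmless relabelling/permutation) the $\OMPZD(n,k)$'s, then apply Theorem~\ref{theorem:dmrg} with $|X|=|Y|=n$ — this is the same argument as Theorem~\ref{theorem:application}, which is all the paper itself says about the proof. The extra care you take with the simultaneous permutation putting the diagonal zeros into the first $k$ positions is a reasonable detail to spell out, and it is handled correctly.
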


So it remains to show how to construct OMPZDs.  One can easily show that there is no $\OMPZD(2,1)$ or $\OMPZD(3,2)$, while Theorem~\ref{theorem:existence} shows that there is no $\OMPZD(1,1)$ or $\OMPZD(3,3)$.  Some small examples are given below.

\begin{example}\label{example:small-ompzd} \rm
The following matrices are an $\OMPZD(3,1)$, an $\OMPZD(4,3)$ and an $\OMPZD(5,4)$:
\[ \begin{bmatrix} 1        & 1         & \sqrt{2} \\
                   1        & 1         & -\sqrt{2} \\
									 \sqrt{2} & -\sqrt{2} & 0 \end{bmatrix}
\qquad
\begin{bmatrix} 1 & 1      & 1      & 1 \\
                1 & 0      & \alpha & \beta \\
                1 & \beta  & 0      & \alpha \\
                1 & \alpha & \beta  & 0 \end{bmatrix}, 
\qquad 															
\begin{bmatrix} 1 & 1       & 1       & 1       & 1 \\
                1 & 0       & \varphi & \beta   & \psi \\
                1 & \psi    & 0       & \varphi & \beta \\
                1 & \beta   & \psi    & 0       & \varphi \\
                1 & \varphi & \beta   & \psi    & 0 \end{bmatrix} \]
(where $\alpha= \frac{-1+\sqrt{5}}{2}$, $\beta = \frac{-1-\sqrt{5}}{2}$, and $\psi,\varphi$ are the roots of $2x^2+(1-\sqrt{5})x-1=0$).
\end{example}

In fact, most OMPZDs can be obtained from OMZDs, using the approach of Cheon {\em et al.}~\cite{Cheon99} for reducing the number of zero entries in an orthogonal matrix, which gives the following result.

\begin{lemma} \label{lemma:killingzeroes}
Suppose that $n\geq 4$.  Then, for any $k$ such that $0\leq k \leq n-2$, there exists an $\OMPZD(n,k)$.
\end{lemma}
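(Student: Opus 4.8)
The plan is to start from an OMZD of appropriate size and then selectively ``fill in'' zero diagonal entries by composing with planar rotations, following the technique of Cheon \emph{et al.}~\cite{Cheon99}. The key observation is that if $Q$ is an orthogonal matrix and $R$ is a Givens rotation in the coordinate plane spanned by $e_i$ and $e_j$ (so $R$ differs from the identity only in the $2\times 2$ principal submatrix on rows and columns $i,j$, where it equals $\left[\begin{smallmatrix} \cos\theta & -\sin\theta \\ \sin\theta & \cos\theta \end{smallmatrix}\right]$), then $QR$ is again orthogonal, and $QR$ agrees with $Q$ outside columns $i$ and $j$. The entries of $QR$ in columns $i,j$ are linear combinations $\cos\theta\, (\text{col}_i) \pm \sin\theta\, (\text{col}_j)$ and similarly for the other column, so for all but finitely many angles $\theta$ none of the entries in those two columns vanishes; in particular the diagonal entries in positions $(i,i)$ and $(j,j)$, which were zero in $Q$ if $i,j$ were ``zero positions,'' become nonzero, and no new zeros are introduced anywhere. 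Thus one application of a suitable rotation reduces the number of zero diagonal entries by exactly two (from $n$ to $n-2$), and iterating $t$ times yields an orthogonal matrix with exactly $n-2t$ zero entries, all on the diagonal.

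The steps I would carry out are as follows. First, dispose of the case $k=n$, which is just the existence of an $\OMZD(n)$ from Theorem~\ref{theorem:existence} (valid since $n\ge 4$, so $n\ne 1,3$). Next, for $k$ with $0\le k\le n-2$: if $n-k$ is even, start with an $\OMZD(n)$ and apply the rotation trick $(n-k)/2$ times, each time choosing the rotation plane to be spanned by two coordinates whose diagonal entries are still zero, and choosing the angle to avoid the finitely many bad values; this lands at exactly $k$ zero entries. If $n-k$ is odd, I cannot reach $k$ from $n$ by pairs, so instead I start from an OMZD of an adjacent size: for instance, take an $\OMZD(n)$ is still the natural seed but one must handle the parity differently. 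A cleaner route is to observe that whenever $n-k$ is odd, $k$ and $n$ have opposite parity, so $n-1-k$ is even; one can take an orthogonal matrix that has exactly $k+1$ zeros on the diagonal and one further structural feature allowing a single zero to be removed, or — more simply — build the $\OMPZD(n,k)$ directly by a block construction: embed an $\OMZD(k+ \text{(something)})$ together with a small all-nonzero orthogonal block (an $\OMPZD(m,0)$, which exists for all $m$ by Corollary~\ref{corollary:Knn}) via the direct-sum-then-mix idea, or apply Lemma~\ref{lemma:adding}-style gluing. The rotation method already covers all even values of $n-k$; for the odd values one extra idea is needed, and the small cases $\OMPZD(3,1)$, $\OMPZD(4,3)$, $\OMPZD(5,4)$ in Example~\ref{example:small-ompzd} suggest that the ``circulant-core'' constructions handle the residual odd-gap cases at small $n$, while for larger $n$ one can peel off a $2\times 2$ or $3\times 3$ nonzero orthogonal block and glue it to an OMZD of the complementary size.

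The main obstacle is the parity mismatch: the Givens-rotation step changes the number of diagonal zeros by exactly two, so from a single seed it only reaches values of $k$ congruent to $n$ modulo $2$. Resolving the odd-gap cases cleanly is where the real work lies — one needs either a second family of seeds (orthogonal matrices with an odd number of diagonal zeros and no other zeros) or a gluing lemma that combines an OMZD with a fully-dense orthogonal block so as to produce exactly the desired odd number of zeros. I expect the cleanest writeup to combine: (i) the rotation argument for $n-k$ even, and (ii) a short block construction, using Corollary~\ref{corollary:Knn} for the dense block and Lemma~\ref{lemma:adding} (or a direct verification along its lines) for the odd case, together with the hand-checked small examples to cover any base cases the induction cannot reach.
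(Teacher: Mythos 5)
Your first half (the Givens-rotation step) is exactly the right tool, but your analysis of what it can do is too rigid, and that creates a gap where none exists. You claim each rotation ``reduces the number of zero diagonal entries by exactly two,'' so that from a single seed you can only reach $k\equiv n \pmod 2$, and you then leave the ``odd-gap'' cases to an unspecified mixture of block constructions, gluings, and small examples. That second half is not a proof: you never exhibit or verify a construction producing exactly an odd number $n-k$ of filled-in diagonal entries, and the gluing of Lemma~\ref{lemma:adding} with a dense block does not obviously control the exact number of diagonal zeros (the paper needs a separate, carefully arranged variant of that gluing just for the single excluded value $k=n-1$, which is not even part of this lemma).

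The point you are missing is that the parity obstruction is illusory. Nothing forces you to rotate in a plane spanned by two coordinates whose diagonal entries are both zero: if you conjugate by a simultaneous row-and-column permutation so that coordinate $i$ has a zero diagonal entry and coordinate $j$ has a nonzero one (and all off-diagonal entries in those columns are nonzero, which is maintained inductively), then for all sufficiently small $\theta>0$ the rotated matrix has the $(i,i)$ entry $a_{ii}\cos\theta + a_{ij}\sin\theta = a_{ij}\sin\theta \neq 0$, keeps $(j,j)$ nonzero, and introduces no new zeros --- so one application removes exactly \emph{one} diagonal zero. Combining this with the two-at-a-time step you already have (which is forced only for the very first application, starting from the $\OMZD(n)$, and which is why $k=n-1$ is unreachable by this method), you can decrease the zero count from $n$ to $n-2$ and thereafter by one or two at will, reaching every $k$ with $0\leq k\leq n-2$. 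This is precisely how the paper's proof proceeds; with that observation your odd-case machinery can be deleted entirely.
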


\begin{proof} Let $R(\theta)$ denote the following matrix (where $-\pi < \theta \leq \pi$):
\[ R(\theta)=\left[ \begin{array}{c|c}
             \begin{array}{cc}
						 \cos\theta & -\sin\theta \\
						 \sin\theta &  \cos\theta \end{array} & 0 \\ 
						\hline  0 & I_{n-2} \end{array} \right]. \]
It is clear that $R(\theta)$ is an orthogonal matrix.  Furthermore, for any orthogonal matrix $A$, the product $AR(\theta)$ must also be orthogonal.  Also, the columns of $AR(\theta)$ are identitical to those of $A=(a_{ij})$ apart from the first two, which have the form
\[ \begin{bmatrix} a_{1,1}\cos\theta + a_{1,2}\sin\theta \\
                   \vdots \\
                   a_{n,1}\cos\theta + a_{n,2}\sin\theta \end{bmatrix} 
\qquad \textnormal{and} \qquad
  \begin{bmatrix} -a_{1,1}\sin\theta + a_{1,2}\cos\theta \\
                  \vdots \\
                  -a_{n,1}\sin\theta + a_{n,2}\cos\theta \end{bmatrix}. \]
Let $A$ be an $\OMZD(n)$, which exists by Theorem~\ref{theorem:existence} since $n\geq 4$.  Then we may choose an arbitrarily small value of $\theta>0$ to ensure that all entries of these two columns are nonzero, and thus $AR(\theta)$ is an $\OMPZD(n,n-2)$.  By permuting rows and columns, and repeatedly applying this process, we may obtain an $\OMPZD(n,k)$ for any $k$ where $0\leq k \leq n-2$. 
\end{proof}

We remark that this approach does not work to obtain an $\OMPZD(n,n-1)$.  However, our methods for constructing OMZDs from Section~\ref{sec:existence} can easily be adapted to this situation.

\begin{lemma} \label{lemma:ompzd}
There exists an $\OMPZD(n,n-1)$ if and only if $n\neq 2,3$.
\end{lemma}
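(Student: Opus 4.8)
The plan has three ingredients: show directly that an $\OMPZD(n,n-1)$ cannot exist for $n=2,3$; record explicit matrices for the orders $n=1,4,5$; and obtain every larger order by adapting the ``adding'' construction of Lemma~\ref{lemma:adding}. For non-existence, an $\OMPZD(2,1)$ would, after a simultaneous swap of its two rows and columns if necessary, have rows $(0,b)$ and $(c,d)$ with $b,c,d$ nonzero, whose inner product $bd$ is nonzero --- impossible in an orthogonal matrix; and an $\OMPZD(3,2)$ would have two rows, say rows $i$ and $j$, with the $(i,i)$ and $(j,j)$ entries zero, so that in their inner product $\sum_k a_{ik}a_{jk}$ only the term $a_{i\ell}a_{j\ell}$ (with $\ell$ the third index) survives, and it is nonzero --- again impossible. (Both facts are already noted just before Lemma~\ref{lemma:killingzeroes}.)

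For existence, $[1]$ is an $\OMPZD(1,0)$ and Example~\ref{example:small-ompzd} supplies an $\OMPZD(4,3)$ and an $\OMPZD(5,4)$; these are the base cases of an induction on $n$. The inductive step is the following variant of Lemma~\ref{lemma:adding}: if there exist an $\OMZD(m+1)$ and an $\OMPZD(n'+1,n')$, then there exists an $\OMPZD(m+n',m+n'-1)$. To prove it, scale the $\OMZD(m+1)$ so that $MM^\mathrm{T}=I$ and write it as $M = \left[ \begin{array}{c|c} 0 & u^\mathrm{T} \\ \hline v & B \end{array} \right]$ with $m\times m$ core $B$, just as in Lemma~\ref{lemma:adding}. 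An $\OMPZD(n'+1,n')$ has exactly one nonzero diagonal entry, hence a row whose diagonal entry is $0$; bring that row and its column to the front by a simultaneous permutation, rescale so that the product with its transpose is $I$, and write the result as $N = \left[ \begin{array}{c|c} 0 & x^\mathrm{T} \\ \hline y & C \end{array} \right]$ with $n'\times n'$ block $C$. Then $C$ still has exactly one nonzero diagonal entry and all its off-diagonal entries nonzero, while $u,v,x,y$ are nowhere zero; and orthogonality of $M$ and $N$ gives precisely the relations used in Lemma~\ref{lemma:adding}, namely $u^\mathrm{T}u=x^\mathrm{T}x=1$, $Bu=0$, $Cx=0$, $BB^\mathrm{T}=I-vv^\mathrm{T}$ and $CC^\mathrm{T}=I-yy^\mathrm{T}$. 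Putting
\[ Q = \left[ \begin{array}{c|c} B & vx^\mathrm{T} \\ \hline yu^\mathrm{T} & C \end{array} \right], \]
the identical block computation shows $QQ^\mathrm{T}=I$, while $Q$ has all off-diagonal entries nonzero and a diagonal consisting of that of $B$ (all zero) followed by that of $C$ (one nonzero entry); hence $Q$ is an $\OMPZD(m+n',m+n'-1)$.

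To close the induction, for $n\geq 6$ I would apply this step with $m=3$ --- using the $\OMZD(4)$ of Example~\ref{example:conf} --- and $n'=n-3$, using an $\OMPZD(n-2,n-3)$ furnished by the inductive hypothesis (which applies since $n-2\geq 4$), producing an $\OMPZD(n,n-1)$. I expect the only real subtlety to be the conceptual one behind why this adaptation is valid: the machinery of Lemma~\ref{lemma:adding} never involves the diagonal entries of the cores $B$ and $C$, but only the hollowness of the two distinguished rows and the non-vanishing of the border vectors $u,v,x,y$, so an $\OMPZD$ may be substituted into one slot at no cost. The remaining point to keep in mind is that the orders $4$ and $5$ genuinely need the explicit matrices of Example~\ref{example:small-ompzd}, since $\OMZD(3)$, $\OMPZD(2,1)$ and $\OMPZD(3,2)$ all fail to exist, so that neither an $\OMPZD(4,3)$ nor an $\OMPZD(5,4)$ can be assembled from smaller pieces by this construction.
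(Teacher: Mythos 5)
Your proposal is correct and takes essentially the same approach as the paper: the paper likewise handles $n=1,4,5$ via the trivial matrix and Example~\ref{example:small-ompzd}, cites the earlier observations for the non-existence of an $\OMPZD(2,1)$ and $\OMPZD(3,2)$, and for $n\geq 6$ reuses the block construction of Lemma~\ref{lemma:adding} with one slot occupied by a matrix whose core carries the single nonzero diagonal entry. The only cosmetic difference is the arrangement: the paper fixes an $\OMPZD(4,3)$ (with its zero $(1,1)$ entry brought to the corner) and pairs it directly with an $\OMZD(n-2)$ from Theorem~\ref{theorem:existence}, whereas you fix an $\OMZD(4)$ and grow the $\OMPZD(n-2,n-3)$ factor by induction---both are instances of the same adapted construction.
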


\begin{proof} An $\OMPZD(1,0)$ is trivial to construct.  As already observed, neither an $\OMPZD(2,1)$ nor an $\OMPZD(3,2)$ can exist.  Examples of an $\OMPZD(4,3)$ and an $\OMPZD(5,4)$ were given in Example~\ref{example:small-ompzd} above.  So we assume that $n\geq 6$.  Let $M$ be an $\OMPZD(4,3)$, scaled so that $MM^\mathrm{T}=I_4$, and arranged so that the $(1,1)$ entry is zero.  Let $N$ be an $\OMZD(n-2)$, which exists by Theorem~\ref{theorem:existence}, also scaled such that $NN^\mathrm{T}=I_{n-2}$.  As in the proof of Lemma~\ref{lemma:adding}, we suppose that these have the form
\renewcommand{\arraystretch}{1.25}
\[ M = \left[ \begin{array}{c|c} 0 & u^\mathrm{T} \\ \hline v & B \end{array} \right], \qquad N = \left[ \begin{array}{c|c} 0 & x^\mathrm{T} \\ \hline y & C \end{array} \right],
\]
and then define
\[ Q = \left[ \begin{array}{c|c} B & vx^\mathrm{T} \\ \hline yu^\mathrm{T} & C \end{array} \right]. \]
\renewcommand{\arraystretch}{1.0}%
It follows---using the same method as Lemma~\ref{lemma:adding}---that $Q$ is an $\OMPZD(n,n-1)$.
\end{proof}

Combining Theorem~\ref{theorem:existence}, Example~\ref{example:small-ompzd}, Lemmas~\ref{lemma:killingzeroes} and~\ref{lemma:ompzd}, and the existence of $\OMPZD(n,0)$ from Corollary~\ref{corollary:Knn}, we have proved the following.

\begin{theorem} \label{theorem:ompzd}
Let $n\geq 1$ and $0\leq k \leq n$.  Then there exists an $\OMPZD(n,k)$ if and only if $(n,k)\not\in \left\{ (1,1),\, (2,1),\, (3,2),\, (3,3) \right\}$.
\end{theorem}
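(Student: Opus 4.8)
The plan is to read Theorem~\ref{theorem:ompzd} off as a case analysis on the pair $(n,k)$: every construction and every obstruction needed is already in place, so the proof is essentially the bookkeeping that the four listed exceptions are exactly the pairs not covered by the constructive results.

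First I would settle the negative direction. The pairs $(1,1)$ and $(3,3)$ ask for an $\OMPZD(n,n)$, which is by definition an $\OMZD(n)$, and these orders are ruled out by Theorem~\ref{theorem:existence}. For $(2,1)$, a $2\times 2$ orthogonal matrix with exactly one zero entry, necessarily on the diagonal, would (after a permutation) have first row $(0,a)$ and second row $(b,c)$ with $a,b,c$ nonzero; orthogonality of the two rows forces $ac=0$, a contradiction. For $(3,2)$, a simultaneous row/column permutation lets us take the two zero entries to be in positions $(1,1)$ and $(2,2)$, so rows $1$ and $2$ read $(0,a,b)$ and $(c,0,d)$ with $b,d$ nonzero; their inner product is $bd\neq 0$, again a contradiction. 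These are the ``one can easily show'' claims stated just before the theorem.

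For the positive direction, fix $(n,k)$ with $n\geq 1$, $0\leq k\leq n$, and $(n,k)$ not among the four exceptions, and split on the value of $k$: (i) if $k=0$, an $\OMPZD(n,0)$ is an orthogonal matrix with no zero entries, which exists for every $n\geq 1$ by the proof of Corollary~\ref{corollary:Knn}; (ii) if $1\leq k\leq n-2$, then $n\geq 3$, and for $n\geq 4$ Lemma~\ref{lemma:killingzeroes} supplies an $\OMPZD(n,k)$, while for $n=3$ the only possibility is $k=1$, handled by the $\OMPZD(3,1)$ in Example~\ref{example:small-ompzd}; (iii) if $k=n-1$, Lemma~\ref{lemma:ompzd} gives an $\OMPZD(n,n-1)$ precisely when $n\neq 2,3$, and the excluded cases $n=2,3$ are exactly the pairs $(2,1)$ and $(3,2)$; (iv) if $k=n$, Theorem~\ref{theorem:existence} gives an $\OMZD(n)=\OMPZD(n,n)$ precisely when $n\neq 1,3$, and the excluded cases are exactly $(1,1)$ and $(3,3)$. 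It then remains to check that ranges (i)--(iv) exhaust $0\le k\le n$ for every $n$: for $n=1$ one has $k\in\{0,1\}$, covered by (i) and (iv); for $n=2$, $k\in\{0,1,2\}$, covered by (i), (iii), (iv) with (ii) vacuous; and for $n\geq 3$ the four ranges together cover $\{0,\dots,n\}$. Hence in every non-exceptional case we have exhibited an $\OMPZD(n,k)$, which together with the negative direction proves the theorem.

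There is no substantive obstacle here — the whole content lies in Corollary~\ref{corollary:Knn}, Lemmas~\ref{lemma:killingzeroes} and~\ref{lemma:ompzd}, Theorem~\ref{theorem:existence}, and Example~\ref{example:small-ompzd}. The only points demanding care are the two small non-existence verifications for $(2,1)$ and $(3,2)$, and making sure the $k$-ranges abut correctly at the small orders so that no pair is left unaddressed and the four exceptions land exactly where the constructive results break down.
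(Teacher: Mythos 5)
Your proposal is correct and follows essentially the same route as the paper, which proves the theorem by exactly this assembly of Theorem~\ref{theorem:existence}, Example~\ref{example:small-ompzd}, Lemmas~\ref{lemma:killingzeroes} and~\ref{lemma:ompzd}, and Corollary~\ref{corollary:Knn}. Your explicit verifications of the non-existence of an $\OMPZD(2,1)$ and an $\OMPZD(3,2)$ (which the paper leaves as ``one can easily show'') are correct, and your case analysis on $k$ covers all pairs $(n,k)$.
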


Immediately, we have the following result for graphs.

\begin{corollary} \label{corollary:qG}
Suppose that $n\geq 1$ and $1\leq k\leq n$, and let $G_{n,k}$ be the bipartite graph obtained by deleting a matching of size $k$ from $K_{n,n}$.  Then, unless $(n,k)\in \left\{ (1,1),\, (2,1),\, (3,2),\, (3,3) \right\}$, we have $q(G_{n,k})=2$.
\end{corollary}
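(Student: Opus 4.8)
The plan is to deduce this statement directly from the two main results of the subsection, namely Theorem~\ref{theorem:arbitrarymatching} and Theorem~\ref{theorem:ompzd}, since essentially all of the combinatorial and linear-algebraic content has already been established there; the corollary is a mere translation of the existence classification for OMPZDs into the language of distinct eigenvalues.

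First I would invoke Theorem~\ref{theorem:ompzd}: provided $0\le k\le n$ and $(n,k)\notin\{(1,1),(2,1),(3,2),(3,3)\}$, there exists an $\OMPZD(n,k)$. The hypotheses of the corollary give $1\le k\le n$ and exclude exactly those four pairs, so an $\OMPZD(n,k)$ is guaranteed in every case under consideration. Next I would apply Theorem~\ref{theorem:arbitrarymatching}, which asserts that $q(G_{n,k})=2$ holds precisely when such an $\OMPZD(n,k)$ exists; chaining the two implications yields $q(G_{n,k})=2$ for all $(n,k)$ not on the excluded list.

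The only point requiring a moment's care is a bookkeeping check that the exceptional sets of the two cited theorems are consistent and that no boundary case slips through. The restriction $k\ge 1$ means $G_{n,k}$ is obtained by deleting at least one edge, but for $n\ge 2$ it still retains $n^2-k\ge n(n-1)>0$ edges and so is non-null, so the equivalence ``$q(G)=2$ iff the relevant pattern class contains an orthogonal matrix'' applies cleanly, with no interference from the degenerate possibility $q(G)=1$; the genuinely null case $n=k=1$ is precisely the already-excluded pair $(1,1)$. I do not anticipate any real obstacle here: the corollary is a straightforward consequence of the preceding structural results, and the substantive work — constructing OMZDs and OMPZDs and establishing the graph-theoretic equivalence — was carried out in the earlier sections.
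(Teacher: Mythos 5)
Your proposal is correct and matches the paper's (implicit) argument exactly: the corollary is stated there as an immediate consequence of Theorem~\ref{theorem:arbitrarymatching} combined with Theorem~\ref{theorem:ompzd}, which is precisely the chain of implications you give. Your extra check that the exceptional sets line up and that the null case $(1,1)$ is excluded is sound and adds nothing problematic.
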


The two exceptions which we have not already encountered are when $(n,k)=(2,1)$ and $(n,k)=(3,2)$.  In the former case, the graph $G_{2,1}$ is a path with $4$ vertices, so $q(G_{2,1})=4$ by \cite[Theorem 3.1]{Fonseca10}. In the latter case, the graph $G_{3,2}$ is a Cartesian product $P_3\ \Box\ K_2$; by~\cite[Theorems~3.2 and 6.7]{dmrg} we have $3\leq q(G_{3,2})\leq 4$.

\subsection{Complete multipartite graphs} \label{sec:multi}
The existence of symmetric OMZDs of even order can be used to demonstrate that that $q(G)=2$ for yet another family of graphs.  We use $K_{n^m}$ to denote the complete multipartite graph with $m$ parts of size $n$.

\begin{theorem} \label{theorem:multi}
Let $K_{n^m}$ be the complete multipartite graph with $m$ parts of size $n$, where $m$ is even and $m\neq 4$.  Then $q(K_{n^m})=2$.
\end{theorem}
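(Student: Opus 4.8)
The plan is to produce an orthogonal matrix inside $\mathcal{S}(K_{n^m})$; since $K_{n^m}$ is non-null (it has an edge as soon as $m\geq 2$, and here $m\geq 2$), the equivalence recalled in Section~\ref{sec:eigenvalues}, due to Ahmadi {\em et al.}\ \cite{dmrg}, then gives $q(K_{n^m})=2$ immediately. The matrix I would build is a Kronecker product $S\otimes T$, where $S$ is a symmetric $\OMZD(m)$ and $T$ is a fixed $n\times n$ symmetric orthogonal matrix all of whose entries are nonzero.

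First, since $m$ is even and $m\neq 4$, Theorem~\ref{theorem:symmetric} supplies a symmetric $\OMZD(m)$, which after rescaling may be assumed to satisfy $SS^{\mathrm{T}}=S^2=I_m$. Next I need a symmetric orthogonal $n\times n$ matrix $T$ with no zero entries, normalised so that $T^2=I_n$. For $n\neq 2$ one may take $T=I_n-\frac{2}{n}J_n$, where $J_n$ is the $n\times n$ all-ones matrix: this is symmetric, satisfies $T^2=I_n$, and has a zero entry only when $n=2$. For $n=2$ one instead takes $T=\frac{1}{\sqrt{2}}\begin{bmatrix}1&1\\1&-1\end{bmatrix}$, which is again symmetric with $T^2=I_2$ and no zero entry.

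Now set $A=S\otimes T$. Then $A^{\mathrm{T}}=S^{\mathrm{T}}\otimes T^{\mathrm{T}}=S\otimes T=A$ and $A^2=S^2\otimes T^2=I_m\otimes I_n=I_{mn}$, so $A$ is a symmetric orthogonal matrix. Writing $A$ in $m\times m$ block form with $n\times n$ blocks, the $(i,j)$ block equals $S_{ij}T$: the diagonal blocks vanish because $S$ has zero diagonal, while every off-diagonal block is entrywise nonzero because $S_{ij}\neq 0$ and $T$ has no zero entry. Hence two distinct vertices are adjacent in the graph of $A$ precisely when they lie in different blocks, i.e.\ the graph of $A$ is exactly $K_{n^m}$, so $A\in\mathcal{S}(K_{n^m})$, and $q(K_{n^m})=2$ follows.

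There is no real obstacle here: the construction is essentially forced, since the hypothesis ``$m$ even, $m\neq 4$'' is exactly the condition under which Theorem~\ref{theorem:symmetric} yields a symmetric $\OMZD(m)$, and a symmetric orthogonal matrix with no zero entries exists in every order. The only point requiring any care is the order $n=2$, where $I_n-\frac{2}{n}J_n$ acquires zero entries and must be replaced by the $2\times2$ matrix above.
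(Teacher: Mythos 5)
Your proof is correct and follows essentially the same route as the paper: a Kronecker product of a symmetric $\OMZD(m)$ (from Theorem~\ref{theorem:symmetric}) with a nowhere-zero symmetric orthogonal matrix of order $n$ such as $I-\frac{2}{n}J$, with the small order $n=2$ handled separately. The only cosmetic difference is that the paper simply notes the $n\leq 2$ cases are trivial, whereas you exhibit the $2\times 2$ matrix explicitly.
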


\begin{proof} Since $m$ is even, we have $m>1$ which ensures that the graph is non-null, and thus $q(K_{n^m})>1$.  Also, since an orthogonal matrix has exactly two distinct eigenvalues, it suffices to demonstrate the existence of an orthogonal matrix in $\mathcal{S}(K_{n^m})$.  Let $A=(a_{ij})$ be a symmetric $\OMZD(m)$, which (since $m$ is even and $m\neq 4$) exists by Theorem~\ref{theorem:symmetric}, and let $B$ be a nowhere-zero symmetric orthogonal matrix of order $n$ (as observed earlier, this is trivial to find for $n\leq 2$, and $B=I-\frac{2}{n}J$ works for $n\geq 3$).  Now consider the Kronecker product of $A$ and $B$, which has the form
\[ A\otimes B = \begin{bmatrix} 0       & a_{12}B & \cdots & a_{1m}B \\
                                a_{21}B & 0       & \cdots & a_{2m}B \\
																\vdots  &         & \ddots & \vdots \\
																a_{m1}B & a_{m2}B & \cdots & 0 \end{bmatrix}. \]
By standard properties of Kronecker products, we have that $A\otimes B$ is symmetric and orthogonal.  Also, the all-zero blocks on the diagonal and nowhere-zero blocks off the diagonal show that $A\otimes B \in \mathcal{S}(K_{n^m})$.  Hence, when $m$ is even and $m\neq 4$, $q(K_{n^m})=2$. \end{proof}

In the case $m=2$, we have a complete bipartite graph $K_{n,n}$, and the matrix obtained here is exactly the matrix described in~\cite[Corollary 6.5]{dmrg}.  We note that, as matrices in $\mathcal{S}(G)$ can have arbitrary diagonal entries, the matrices in the proof of Theorem~\ref{theorem:multi} have more structure than is necessary.  Also, the assumptions that $m$ is even and $m\neq 4$ do not appear to be natural; we suspect that for any complete multipartite graph $K_{n^m}$ where $n\geq 1$ and $m\geq 2$, we will always have that $q(K_{n^m})=2$.

%\newpage
\section*{Acknowledgements}
The first author is supported by an NSERC Discovery Grant and a Memorial University of Newfoundland startup grant, and would like to thank the Discrete Mathematics Research Group at the University of Regina for sparking his interest in this problem.  Both authors would like to thank the organizers of the 2016 Western Canada Linear Algebra Meeting and 2016 Prairie Discrete Mathematics Workshop at the University of Manitoba, which facilitated this collaboration.

\end{document}